\numberwithin{equation}{section}
\theoremstyle{plain}
\newtheorem{thm}{Theorem}[section]
\newtheorem{prop}[thm]{Proposition}
\newtheorem{lem}[thm]{Lemma}
\theoremstyle{remark}
\theoremstyle{definition}
\newtheorem{defn}[thm]{Definition}
\newtheorem*{nota}{Notation}
\DeclareFontFamily{OT1}{rsfs}{}
\DeclareFontShape{OT1}{rsfs}{m}{n}{ <-7> rsfs5 <7-10> rsfs7 <10-> rsfs10}{}
\DeclareMathAlphabet{\mycal}{OT1}{rsfs}{m}{n}
\newcommand{\eps}{\varepsilon}
\newcommand{\BJ}[2]{j_{#1} \left(#2 \right)}
\newcommand{\TBJ}[2]{J_{#1} \left(#2 \right)}
\newcommand{\TBJp}[2]{J^\prime_{#1} \left(#2 \right)}
\newcommand{\BJp}[2]{j^\prime_{#1}\left(#2\right)}
\newcommand{\BY}[2]{y_{#1} \left(#2 \right)}
\newcommand{\TBY}[2]{Y_{#1} \left(#2 \right)}
\newcommand{\TBYp}[2]{Y^\prime_{#1} \left(#2 \right)}
\newcommand{\BYp}[2]{y^\prime_{#1}\left(#2\right)}
\newcommand{\BH}[2]{h^{(1)}_{#1}\left(#2 \right)}
\newcommand{\TBH}[2]{H^{(1)}_{#1}\left(#2 \right)}
\newcommand{\BHp}[2]{h^{(1)\prime}_{#1}\left(#2 \right)}
\newcommand{\Rn}{r_n\left(\oeps,\lambda\right)}
\newcommand{\TRn}{R_n\left(\oeps,\lambda\right)}
\newcommand{\Tn}[2]{t_n\left(#1,#2\right)}
\newcommand{\oeps}{\omega_{\eps}}
\newcommand{\lneps}{\left|\ln\eps\right|}
\newcommand{\forget}[1]{ }
\begin{document}

\title[Size Estimates]{On the scattered field generated by a ball inhomogeneity of constant index in dimension three}

\author{Yves Capdeboscq}
\address{Mathematical Institute, 24-29 St Giles, OXFORD OX1 3LB, UK}
\email{yves.capdeboscq@maths.ox.ac.uk}

\thanks{Yves Capdeboscq is supported by the EPSRC Science and Innovation award to the Oxford Centre for Nonlinear PDE (EP/E035027/1).} 

\author{George Leadbetter}
\address{Department of Mathematics, University College London, London WC1E 6BT, UK}

\author{Andrew Parker}
\address{Mathematical Institute, 24-29 St Giles, OXFORD OX1 3LB, UK}

\subjclass{Primary 35J05, 35B30; Secondary  35P25,33C10}

\begin{abstract}
We consider the solution of a scalar Helmholtz equation where the potential (or index) takes two positive values,
one inside a ball of radius $\eps$ and another one outside.  In this paper, we report that the results 
recently obtained in the two dimensional case in \cite{CAPDEBOSCQ-11} can be easily extended to three dimensions. 
In particular, we provide sharp estimates of the size of the scattered field caused by this ball inhomogeneity, 
for any frequencies and any contrast. We also provide a broadband estimate,
that is, a uniform bound for the scattered field for any contrast, and any frequencies outside of a set which 
tends to zero with $\eps$.
\end{abstract}

\maketitle

\section{Introduction}

We consider a scalar field satisfying the Helmholtz equation with frequency $\omega>0$ in $\mathbb{R}^3$.
Given a prescribed incident field $u^i$, a non-singular solution of 
\begin{equation}\label{eq:eq-intro-1}
\Delta u^{i}+\omega^{2}q_{0}u^{i}=0 \mbox{ in } \mathbb{R}^{3},
\end{equation}
we are interested in the solution $u_{\eps}\in H^1_{\mbox{loc}}\left(\mathbb{R}^{3}\right)$ of 
\begin{equation}\label{eq:eq-intro-2}
\Delta u_{\eps}+\omega^{2}q_{\eps}u_{\eps}=0\mbox{ in }\mathbb{R}^{3},
\end{equation}
where, for $|x|>\eps$, $u_{\eps}=u^{i}+u_{\eps}^{s}$, and  $q_{\eps}$ equals $q$ 
inside the inhomogeneity and $q_{0}$ outside. 
We take the inhomogeneity to be a ball of radius $\eps$. The coordinate system is chosen so that the 
inhomogeneity is centered at the origin. In other words
\[
q_{\eps}(r):=\left\{ \begin{array}{ll}
q & \mbox{ if }r<\eps\\
q_{0} & \mbox{ if }r>\eps\end{array}\right.
\]
We assume that both $q_{0}$ and $q$ are real and positive. 
We assume that the scattered field satisfies the classical Silver-M\"uller \cite{MULLER-69} outgoing radiation condition, 
given by
\begin{equation}\label{eq:eq-intro-3}
\frac{\partial}{\partial r}u_{\eps}^{s}-i\omega\sqrt{q_{0}}u_{\eps}^{s}=o\left(\frac{1}{r}\right),
\end{equation}
where, as usual $r:=|x|$.

The purpose of this paper is to provide sharp estimates for the scattered field $u_{\eps}^{s}$, for any contrast $q/q_0$ 
and any frequency $\omega$. The norms we use to describe the scattered field are the following. 
Given any $f \in C^0(\mathbb R^3)$, its restriction to the circle $|x|=R$ can be decomposed in terms of the spherical 
harmonics given by \eqref{eq:def-spherical-harmonics}, in the following way
$$
f(|x|=R) = \sum_{n=0}^\infty\sum_{m=-n}^{n} f_{n,m}(|x|=R) \mathbf{Y}_n^{m}\left(\frac{x}{|x|}\right),
$$
and $f(|x|=R)$ can be measured in terms of the following Sobolev norm
\begin{equation}\label{eq:norm-hs-0}
\left\Vert  f \left(|x|=R\right) \right\Vert _{H^{\sigma}} := 
 \sqrt{\sum_{n=0}^{\infty} \sum_{m=-n}^{n} |f_{n,m}(|x|=R)|^2 (2n+1)^{2\sigma} },
\end{equation}
for any real parameter $\sigma$. By density, this norm can be defined for less regular functions. 
For radius independent estimates, we shall use the norm
\begin{equation}\label{eq:norm-ns-0}
\mathcal{N}^{\sigma} (f) :=  \sqrt{\sum_{n=0}^{\infty}(2n+1)^{2\sigma} \sum_{m=-n}^{n} \sup_{R>0} |f_{n,m}(|x|=R)|^2   }.
\end{equation}
It is easy to see that this norm is finite for a smooth $f$ with bounded radial variations. For a radial function, this is simply the sup 
norm for $f$. Finally, to document the sharpness of our estimates, we will provide lower bounds in terms of the semi-norms
\begin{equation}\label{eq:norm-np-0}
\mathbf{N}_{p,q}^{\sigma} \left( f ,\kappa\right) :=  \sup\limits_{p\leq n \leq q} \sup_{R>0} \kappa^{n+\frac{1}{2} - (n+\frac{1}{2})^{5/6}} (2n+1)^{\sigma} \sqrt{\sum_{m=-n}^{n} |f_{n,m}(|x|=R) |^2} ,
 \end{equation} 
where $q\geq p\geq0$ are integers and $\kappa\geq1$ is a real parameter. These norms are satisfy the following inequality
$$
\left\Vert  f \left(|x|=R\right) \right\Vert _{H^{\sigma}} \leq \mathcal{N}^{\sigma} (f), 
\mbox{ and } \mathbf{N}_{p,\infty}^{\sigma} \left( f,1 \right) \leq \mathcal{N}^{\sigma} (f),
$$
and if for all $R$, $f\left(|x|=R\right)$ only has one non zero spherical harmonic coefficient,
$$
\mathbf{N}_{0,\infty}^{\sigma} \left( f,1 \right) =  \mathcal{N}^{\sigma} (f) = \sup_{R>0} \left\Vert  f \left(|x|=R\right) \right\Vert _{H^{\sigma}}.
$$

They are the natural extension of the norms introduced in \cite{CAPDEBOSCQ-11} for the two dimensional companion problem. When the incident field is a plane wave, 
$$
u^i(\mathbf{x}) = \exp\left(i\sqrt{q_0} \omega \zeta \cdot x\right)
$$
where $\zeta$ is a unit vector in $\mathbb{R}^3$, for all $R>0$, 
$$
\left\Vert  u^{i}\left(|x|=R\right) \right\Vert _{H^{0}} =\mathcal{N}^0(u^i)=\sqrt{\sum_{n=0}^\infty (2n+1) \BJ{n}{\oeps R/\eps}^2}=1,
$$
whereas for any $q\geq p$ and $\kappa\geq1$,
$$
\mathbf{N}^{\sigma}_{p,q}(u^i,\kappa) = \max_{p\leq n\leq q} C(n) \kappa^{n+\frac{1}{2} - (n+\frac{1}{2})^{5/6}} (2n+1)^{\sigma-\frac{1}{3}},
$$
where $2^{\frac{3}{5}}\geq C(n)\geq 1$ for all $p$ and $q$ (see \eqref{eq:bdjn}). 

The motivation from this work comes from imaging.  In electrostatics, the small volume asymptotic expansion for a diametrically bounded conductivity 
inclusion is now well established, and the first order expansion has been shown to be valid for any contrast \cite{NGUYEN-VOGELIUS-09}. 
It is natural to  ask whether such expansions could also hold for non-zero frequencies, even in a simple case.  Another inspiration for this work is recent results 
concerning the so-called cloaking-by-mapping method for the Helmholtz equation. In \cite{KOHN-ONOFREI-VOGELIUS-WEINSTEIN-10, HMNGUYEN-11,LIU-ZHOU-11}, 
the authors show that cloaks can be constructed using lossy layers, and that non-lossy  media could not be made invisible to some particular 
frequencies (the quasi-resonant frequencies). Within the range of non-lossy media, one can ask whether such `cloak busting' frequencies 
are a significant phenomenon, that is, would appear with non-zero probability in any large frequency set, or on the contrary if they 
are contained in a set whose measure tends to zero with $\eps$. 

These questions were considered in two dimensions in \cite{CAPDEBOSCQ-11}. In this paper we show that these results extend, after
some adjustments, to the three dimensional case. 
The proofs presented in this paper are very similar to the ones of the two-dimensional paper, but we believe the results,
more than their derivation, could be of interest to researchers in various areas of mathematics. In numerical analysis 
they could be used as a validation test for broadband Helmholtz solvers, since we provide both upper and lower bounds 
for the scattering data. In the area of small volume expansion for arbitrary geometries, or in the mathematical developments 
related to cloaking,  they provide a 'best case scenario'  which can be used to document the sharpness of more general estimates. 

To make the results of this paper accessible to readers who are not familiar with Bessel functions, the main estimates 
are written in terms of the norms $\|\cdot \|_{H^{\sigma}}, \mathbf{N}_{p,q}^{\sigma}(\cdot,\kappa)$ and $\mathcal{N}^{\sigma}(\cdot)$ introduced above, 
and powers of $2$. Because no unknown constant $C>0$ appears in the results, this paper can be used as a black-box if the reader wishes 
to do so.  Bessel functions do appear in one place, to describe quasi-resonances, but they turn out to be of the same nature as the two-dimensional ones, 
and so we refer to \cite{CAPDEBOSCQ-11} in that case.

The main results of the paper are presented in Section~\ref{sec:main}. The proofs are given in Section~\ref{sec:proofs}.

\section{Main results\label{sec:main}}

To state our results, we introduce the rescaled non-dimensional frequency  $\oeps$, and the contrast factor $\lambda$ given by
\begin{equation}\label{eq:def-oeps-lambda}
 \oeps:= \sqrt{q_0}\omega\eps \, \mbox{ and } \lambda := \sqrt{\frac{q}{q_0}}.
\end{equation}

The following theorem provides our estimates for either small frequencies or for any frequency.

\begin{thm}\label{thm:estim-all} 
For any $R\geq\eps$, when $\max(\lambda,1)\oeps< \frac{1}{2}$ there holds
\begin{equation}\label{eq:omsmall} 
\left\Vert u_{\eps}^{s} \left(|x|=R\right) \right\Vert _{H^{\sigma}}\leq  
2^{4/3} |\lambda -1|\,\oeps  \,\frac{\eps}{R} \left\Vert  u^{i}\left(|x|=\eps\right) \right\Vert _{H^{\sigma-\frac{1}{3}}}.
\end{equation}
If  $p_0\geq0$ is the first integer such that the $(p_0,m)$ spherical harmonic decomposition coefficient of $u_{\eps}^{i} \left(|x|=\eps\right)$ 
is non zero for some $-p_0\leq m\leq p_0$, then \eqref{eq:omsmall} holds for all $\max(\lambda,1)\oeps< p_0+ \frac{1}{2}$.
Furthermore, for any $R\geq\eps$, when $\max(\lambda,1)\oeps< \frac{1}{2}$, the scattered field $u_{\eps}^{s}$ also satisfies
\begin{eqnarray}
\label{eq:corl1} 
&&\left\Vert u_{\eps}^{s} \left(|x|=R\right) \right\Vert _{H^{\sigma}} \\
&&\leq 2^{2/3} |\lambda- 1|\max(\lambda,1) \,\oeps^2  \,\frac{\eps}{R} 
\Big( \left|u^{i}(0)\right|+  2^{2/3}\max(\lambda,1)\oeps \, \mathcal{N}^{\sigma-\frac{1}{3}} \left(u^{i}\right)\Big).\notag
 \end{eqnarray}
When $R\geq \max(\lambda,1) \eps $ there holds
\begin{equation}\label{eq:ffield}
\sup\limits_{\omega>0} \left\Vert u_{\eps}^{s} \left(|x|=R\right) \right\Vert _{H^{\sigma}} 
\leq 
2^{4/3} \,
\max(\lambda,1)\frac{\eps}{R} \,\mathcal{N}^{\sigma} \left(u ^{i} \right).
\end{equation}
\end{thm}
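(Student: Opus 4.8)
The plan is to separate variables in spherical harmonics and solve the transmission problem one mode at a time. For the $(n,m)$ harmonic I would write the incident coefficient as $a_{n,m}\BJ{n}{\oeps r/\eps}$, the interior field (regular at the origin) as a multiple of $\BJ{n}{\lambda\oeps r/\eps}$, and the scattered field (outgoing, by \eqref{eq:eq-intro-3}) as a multiple of $\BH{n}{\oeps r/\eps}$. Because $u_\eps\in H^1_{\mathrm{loc}}$, both $u_\eps$ and $\partial_r u_\eps$ are continuous across $r=\eps$; these two conditions form a $2\times2$ linear system whose solution by Cramer's rule gives
\[
u^s_{n,m}\left(|x|=R\right)=a_{n,m}\,\Rn\,\BH{n}{\oeps R/\eps},\qquad
\Rn=\frac{\lambda\BJp{n}{\lambda\oeps}\BJ{n}{\oeps}-\BJ{n}{\lambda\oeps}\BJp{n}{\oeps}}{\BJ{n}{\lambda\oeps}\BHp{n}{\oeps}-\lambda\BJp{n}{\lambda\oeps}\BH{n}{\oeps}}.
\]
Since $|u^i_{n,m}(|x|=\eps)|=|a_{n,m}|\,|\BJ{n}{\oeps}|$ and $\sup_{R>0}|u^i_{n,m}(|x|=R)|=|a_{n,m}|\sup_{t>0}|\BJ{n}{t}|$, and because all the norms in \eqref{eq:norm-hs-0}--\eqref{eq:norm-ns-0} are weighted $\ell^2$ sums over $(n,m)$, it suffices to prove each inequality by term-by-term domination of the weighted sums. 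This reduces the whole theorem to scalar estimates on the single reflection coefficient $\Rn$ and the radial factors $\BH{n}{\oeps R/\eps}$ and $\BJ{n}{\oeps}$, carrying the correct powers of $(2n+1)$.

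Two structural facts drive everything. First, the numerator of $\Rn$ vanishes identically when $\lambda=1$, so it carries an explicit factor $\lambda-1$; this is the origin of the $|\lambda-1|$ in \eqref{eq:omsmall} and \eqref{eq:corl1}. Second, writing $D$ for the denominator of $\Rn$ and $\BH{n}{\oeps}=\BJ{n}{\oeps}+i\,\BY{n}{\oeps}$, one checks that the numerator equals $-\mathrm{Re}\,D$ (here it is essential that $q$ and $q_0$ are real), so that $|\Rn|=|\mathrm{Re}\,D|/|D|\leq1$ for every $n$, every $\oeps$ and every $\lambda$. This uniform bound is the engine of the all-frequency estimate \eqref{eq:ffield}.

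For the small-frequency bounds I would insert the convergent small-argument expansions $\BJ{n}{z}\sim z^n/(2n+1)!!$ and $\BH{n}{z}\sim -i(2n-1)!!\,z^{-(n+1)}$. The hypothesis $\max(\lambda,1)\oeps<p_0+\tfrac12$ keeps both arguments $\oeps$ and $\lambda\oeps$ below the first turning regime, which, through the singular $\BY{n}{\oeps}$ part, keeps $|D|$ bounded away from zero, so that no quasi-resonance occurs and the expansions are uniformly controlled. Extracting the leading terms then produces the geometric factor $\eps/R$ from $\BH{n}{\oeps R/\eps}$, the factor $\oeps$ from the $(\lambda-1)$-divisible numerator, and the index shift $\sigma\mapsto\sigma-\tfrac13$ from the scaling $\sup_t|\BJ{n}{t}|\sim(2n+1)^{-5/6}$ of the spherical Bessel maxima against the $\sqrt{2n+1}$ multiplicity of order-$n$ harmonics. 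The quadratic refinement \eqref{eq:corl1} I would obtain by splitting off the monopole $n=0$, whose coefficient reconstructs $u^i(0)$ and whose reflection coefficient is in fact $O(\oeps^3)$, from the modes $n\geq1$, which are absorbed into the $\mathcal{N}^{\sigma-\frac13}$ term; bookkeeping of the double factorials and the Bessel maxima yields the explicit constants $2^{4/3}$ and $2^{2/3}$.

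The genuinely hard estimate is the all-frequency bound \eqref{eq:ffield}, where $\oeps$ ranges over all of $(0,\infty)$ and small-argument expansions are useless. Using $|\Rn|\leq1$ alone fails, because $\BH{n}{\oeps R/\eps}$ is large at small argument; the point is that for $\oeps$ small compared with $n$ the coefficient $\Rn$ is correspondingly tiny, whereas for $\oeps$ large $|\BH{n}{\oeps R/\eps}|$ decays like $\eps/(\oeps R)$, so that the product $|\Rn\,\BH{n}{\oeps R/\eps}|$ is uniformly small at both extremes. I expect the main obstacle to be the intermediate turning-point region $\oeps\sim n$, together with the quasi-resonances where $D$ nearly vanishes: there one must show that the near-cancellation in $D$ is always compensated, so that $|\Rn\,\BH{n}{\oeps R/\eps}|\leq 2^{4/3}\max(\lambda,1)(\eps/R)\sup_t|\BJ{n}{t}|$ holds uniformly in $\oeps$. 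The hypothesis $R\geq\max(\lambda,1)\eps$ enters exactly here: it forces the Hankel argument $\oeps R/\eps$ to sit beyond the interior turning point at $\lambda\oeps$, so that the decaying regime of $\BH{n}{\cdot}$ can be compared against the interior Bessel factors, and this comparison is also what produces the prefactor $\max(\lambda,1)$. Establishing these uniform Bessel estimates through the turning point, most plausibly by the monotonicity and recurrence arguments already developed for the two-dimensional companion problem in \cite{CAPDEBOSCQ-11}, is the crux of the proof.
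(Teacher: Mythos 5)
Your reduction to mode-by-mode scalar estimates on $\Rn$ and the radial factors is exactly the paper's starting point, and your observations that the numerator of $\Rn$ equals $-\Re D$ (hence $|\Rn|\leq1$) and that the monopole must be split off to get \eqref{eq:corl1} are both correct. But the proposal stops precisely where the work begins: every quantitative claim in the theorem rests on uniform-in-$n$ bounds such as $\left|\Rn\BH{n}{\oeps}\right|\leq 2^{4/3}|\lambda-1|\,\oeps\,(2n+1)^{-1/3}\BJ{n}{\oeps}$, valid on the whole range $\max(\lambda,1)\oeps<n+\frac{1}{2}$, and $\left|\Rn\BH{n}{\max(\lambda,1)\oeps}\right|\leq 2^{4/3}\sup_{x>0}|\BJ{n}{x}|$ for every frequency. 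Your plan to obtain the first from leading-order small-argument expansions does not work on that range: for $\oeps$ comparable to $n$ the Bessel functions are in the transition regime, the leading term of the series no longer dominates, and ``keeps $|D|$ bounded away from zero'' is an assertion rather than an estimate. The explicit constants $2^{4/3}$ and $2^{2/3}$ come from sharp inputs (Landau's bound $0.539<\nu^{1/3}\sup_{x>0}\TBJ{\nu}{x}<0.675$, explicit maximization of products like $\TBH{\nu}{x}\TBJ{\nu}{x}$ against the logarithmic-derivative ratio $g_{\nu}$, and a second-derivative bound on $g_{\nu}$), not from double-factorial bookkeeping.

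The missing idea that makes your deferral to \cite{CAPDEBOSCQ-11} legitimate is the identity $r_n(x,\lambda)=R_{n+\frac{1}{2}}(x,\lambda)$ of Proposition~\ref{pro:2d3d}: the three-dimensional spherical reflection coefficient coincides with the two-dimensional one at half-integer order, because $\BJ{n}{x}=\sqrt{\pi/(2x)}\,\TBJ{n+\frac{1}{2}}{x}$ and the extra $-\frac{1}{2t}$ shifts in the logarithmic derivatives cancel between numerator and denominator. Without this (or a full re-derivation in three dimensions) the two-dimensional lemmas you invoke simply do not apply to $\Rn$, and the $n=0$ mode still needs separate treatment since the cited results start at order $1$. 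Moreover, even granting that transfer, your sketch of \eqref{eq:ffield} is incomplete: the imported estimate \eqref{eq:uniform-lowom} only covers $\oeps$ up to $\min(\alpha_{\nu,1}^{(1)}/\lambda,\beta_{\nu,1})$, and beyond the turning point the paper runs a separate argument, combining $|\Rn|\leq1$ with the monotone increase of $\sqrt{x^2-\nu^2}\,|\TBH{\nu}{x}|^{2}$ to $2/\pi$ and the lower bound $\alpha_{\nu,1}^{(1)}>\nu+\frac{4}{5}\nu^{1/3}$, to show that $|\BH{n}{\max(\lambda,1)\oeps}|$ alone is already below $2^{4/3}\sup_{x>0}|\BJ{n}{x}|$ there. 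You correctly name this region as the crux but supply no mechanism for it.
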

Naturally, the variant of \eqref{eq:omsmall} incorporating the more precise estimate given by \eqref{eq:corl1} for the Fourier coefficient corresponding to $n=0$ 
 also holds by linearity. It is easy to verify that the dependence on $\oeps$ in \eqref{eq:corl1} is optimal by a 
Taylor expansion around $\eps=0$ (or $\omega=0$) for a incident wave $u^i$ with only one (or two) non-zero spherical harmonic coefficients for $n=0$ (and $n=1$). 
Theorem~\ref{thm:estim-all} shows that this estimate is valid up to rescaled frequencies of order $1$ when $\lambda<1$, and of order $1/\lambda$ when $\lambda>1$. 
To prove the optimality of these ranges, we define, for $t<1$,
$$
n_0(t) = \min_{n\geq0} \left\{ n \in \mathbb{N} \mbox{ such that } t^2 \leq 1 - \frac{6}{(n+\frac{1}{2})^{2/3}}\right\},
$$ 
and for $t>1$,
$$
n_1(t) = \min_{n\geq1} \left\{ n \in \mathbb{N} \mbox{ such that } t \leq 1 + \frac{3}{(n+\frac{1}{2})^{2/3}}\right\}.
$$ 
\begin{prop}\label{pro:lowerbounds} When $\lambda < 1$ and $\eps\leq R$,
$$
\sup\limits_{\omega>0} \left\Vert  u_{\eps}^{s} \left(|x|=R \right) \right\Vert _{H^{\sigma}}
\geq \frac{1}{4}\,\frac{\eps}{R}  \mathbf{N}_{n_0(\lambda),\infty}^{\sigma-\frac{1}{6}} \left( u^{i} ,1 \right).
$$
When $\lambda>1$, $R<\eps \lambda$ for any integer $q\geq n_1(\lambda)$,
\begin{equation}\label{eq:blowup}
\sup\limits_{0<\lambda\oeps< 6 q} \left\Vert  u_{\eps}^{s} \left(|x|=R \right) \right\Vert _{H^{\sigma}}
\geq 
\frac{1}{2^{7/2}} \, \mathbf{N}_{n_1(\lambda),q}^{\sigma - \frac{1}{3}} \left( u^{i}, \frac{\eps \lambda}{R} \exp\left(\frac{R}{\eps\lambda}-1\right) \right).
\end{equation}
In fact estimate \eqref{eq:blowup} holds if the supremum is taken in the set
$$
0<\lambda \oeps < q + 1.86 q^{1/3} + 1.04 q^{-1/3}.
$$
Note that $\frac{\eps \lambda}{R} \exp\left(\frac{R}{\eps\lambda}-1\right) >1$ when $R<\eps \lambda$.
\end{prop}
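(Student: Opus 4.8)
The plan is to reduce everything to a single spherical mode. Writing the incident field as $u^i=\sum_{n,m}a_{n,m}\BJ{n}{\oeps r/\eps}\mathbf{Y}_n^m$, imposing continuity of $u_\eps$ and of its normal derivative across $r=\eps$, and enforcing the radiation condition \eqref{eq:eq-intro-3}, the modes decouple and I would obtain
\[
(u_\eps^s)_{n,m}(|x|=R)=a_{n,m}\,\Rn\,\BH{n}{\oeps R/\eps},
\]
where $\Rn$ is the modal scattering coefficient, the ratio with numerator $\lambda\BJp{n}{\lambda\oeps}\BJ{n}{\oeps}-\BJ{n}{\lambda\oeps}\BJp{n}{\oeps}$ and denominator $\BJ{n}{\lambda\oeps}\BHp{n}{\oeps}-\lambda\BJp{n}{\lambda\oeps}\BH{n}{\oeps}$. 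Since $\left\Vert u_\eps^s(|x|=R)\right\Vert_{H^\sigma}^2=\sum_n(2n+1)^{2\sigma}\sum_m|a_{n,m}|^2|\Rn|^2|\BH{n}{\oeps R/\eps}|^2$ while the right-hand seminorms are \emph{suprema} over $n$, it suffices, for each admissible $n$, to exhibit one frequency $\omega$ (equivalently one value of $\oeps$) for which the single $n$-th term already dominates the claimed bound. Because $\sup_{R>0}\sqrt{\sum_m|a_{n,m}\BJ{n}{\oeps R/\eps}|^2}=\bigl(\sup_{z>0}|\BJ{n}{z}|\bigr)\sqrt{\sum_m|a_{n,m}|^2}$, the comparison with $\mathbf N^{\cdot}_{\cdot,\cdot}(u^i,\cdot)$ amounts to controlling the product $|\Rn|\,|\BH{n}{\oeps R/\eps}|$ against $\sup_z|\BJ{n}{z}|$ times the appropriate power of $(2n+1)$ and the factor $\kappa^{n+\frac12-(n+\frac12)^{5/6}}$.

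For $\lambda<1$ I would take $\oeps$ at (or just past) the turning point $n+\frac12$, where $\BJ{n}{\oeps}$ attains its global maximum of order $(2n+1)^{-5/6}$. The hypothesis $n\geq n_0(\lambda)$, i.e.\ $\lambda^2\leq 1-6(n+\frac12)^{-2/3}$, is precisely what keeps $\lambda\oeps$ a definite distance below the turning point, so that the interior evaluations $\BJ{n}{\lambda\oeps},\BJp{n}{\lambda\oeps}$ stay oscillatory and $\Rn$ is bounded below by an absolute constant. Since $R\geq\eps$ forces $\oeps R/\eps\geq n+\frac12$, the exterior Hankel factor is oscillatory with $|\BH{n}{\oeps R/\eps}|$ of order $\eps/\bigl((n+\frac12)R\bigr)$; multiplying out produces the prefactor $\frac14\,\frac{\eps}{R}$, and the Sobolev shift of $-\frac16$ reconciles the $(2n+1)^{-5/6}$ height of $\BJ{n}{\cdot}$ on the incident side with the $(2n+1)^{-1}$ decay on the scattered side. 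Obtaining the sharp constant requires Olver-type uniform asymptotics across the turning point, and verifying that $\Rn$ does not degenerate there is the delicate point.

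For $\lambda>1$ and $R<\eps\lambda$ the mechanism is different: I would instead choose $\oeps$ small enough that $\lambda\oeps$ sits near the interior turning point $n+\frac12$. Then $\oeps R/\eps=(n+\frac12)R/(\eps\lambda)<n+\frac12$ lands in the \emph{evanescent} regime of $\BH{n}{\cdot}$, where the Hankel magnitude grows as its argument decreases; an elementary lower bound for this growth yields exactly the amplification factor $\kappa=\frac{\eps\lambda}{R}\exp\!\bigl(\frac{R}{\eps\lambda}-1\bigr)$, raised to the power $n+\frac12-(n+\frac12)^{5/6}$, whence the seminorm $\mathbf N^{\sigma-1/3}_{n_1(\lambda),q}(u^i,\kappa)$. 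The constraint $\lambda\oeps<6q$ (and its sharper form) is what restricts $n$ to $n\leq q$, while $n\geq n_1(\lambda)$, i.e.\ $\lambda\leq 1+3(n+\frac12)^{-2/3}$, guarantees a window in which the denominator of $\Rn$ is genuinely small — a quasi-resonance. The main obstacle throughout is the Bessel analysis: locating these quasi-resonant frequencies, bounding the numerator of $\Rn$ below and the denominator above there, and pinning down the evanescent Hankel growth with explicit constants. Since these spherical estimates reduce, via $\BJ{n}{z}=\sqrt{\pi/2z}\,\TBJ{n+\frac12}{z}$, to the cylindrical ones already established in the two-dimensional companion \cite{CAPDEBOSCQ-11}, I would import that quasi-resonance analysis rather than redo it.
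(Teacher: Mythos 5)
Your plan coincides with the paper's proof: reduce to a single spherical mode, pick $\oeps=n+\frac12$ for $\lambda<1$ (where the imported two-dimensional bound gives $|r_n(\nu,\lambda)h^{(1)}_n(\nu)|>\frac12 j_n(\nu)$ for $n\geq n_0(\lambda)$) and the quasi-resonant frequency $x_n<\alpha_{\nu,1}/\lambda$ with $|r_n(x_n,\lambda)|=1$ for $\lambda>1$, then lower-bound the exterior Hankel factor — including the evanescent growth $\bigl(\frac{\eps\lambda}{R}\exp(\frac{R}{\eps\lambda}-1)\bigr)^{\nu-\nu^{5/6}}$ — exactly as in the paper's Lemma on $|h^{(1)}_n|$, with the $2$D results transferred via $r_n(x,\lambda)=R_{n+\frac12}(x,\lambda)$. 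The only slips are cosmetic: for $n\geq n_0(\lambda)$ the interior argument $\lambda\oeps$ lies \emph{below} the turning point (monotone, not oscillatory, regime of $j_n$), and at a quasi-resonance it is the imaginary part of the denominator of $r_n$ that vanishes (so $|r_n|=1$) rather than the denominator being small.
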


To illustrate the sharp contrast between what these estimates show when $\lambda<1$ and when $1<\lambda$, let us consider the case of a plane wave.
When $\lambda<1$, Theorem~\ref{thm:estim-all} and Proposition~\ref{pro:lowerbounds} show that for any $R\geq\eps$ there holds
$$
\frac{1}{4}\frac{1}{\sqrt{2n_0(\lambda)+1}} \frac{\eps}{R} \leq \sup\limits_{\omega>0} \left\Vert  u_{\eps}^{s} \left(|x|=R \right) \right\Vert _{H^{0}} \leq 2^{4/3} \frac{\eps}{R}
$$  
When $\lambda>1$, Theorem~\ref{thm:estim-all} shows that for any $R \geq\eps$ ,
$$
\sup\limits_{\frac{1}{2}>\lambda \oeps>0} \left\Vert  u_{\eps}^{s} \left(|x|=R \right)\right\Vert _{H^{1/3}} \leq 2^{1/3} \frac{\eps}{R}.
$$
and for any $R\geq \lambda \eps$,
$$
\sup\limits_{\omega>0} \left\Vert  u_{\eps}^{s} \left(|x|=R \right)\right\Vert _{H^{1/3}} \leq 2^{4/3} \frac{\lambda \eps}{R}.
$$
The combination of these two estimates do not provide a bound when $2\lambda \oeps>1$ for the near field $\eps\leq R < \lambda \eps$.  
Proposition~\ref{pro:lowerbounds} provides a lower bound in that case. For any $p\geq n_1(\lambda)$ and any $\sigma$,
$$
\sup\limits_{\oeps<6 p} \left\Vert  u_{\eps}^{s} \left(|x|=R \right)\right\Vert _{H^{\sigma}} 
\geq 2^{-7/2} \left(\frac{\eps \lambda}{R} \exp\left(\frac{R}{\eps\lambda}-1\right)\right)^{p+\frac{1}{2} - (p+\frac{1}{2})^{5/6}} (2p+1)^{\sigma-\frac{2}{3}}.
$$
This lower bound grows geometrically with the upper bound of the interval of frequencies considered. In particular, for any $\lambda>1$, any $R<\lambda\eps$, and  $\sigma\in \mathbb{R}$, 
$$
\sup\limits_{\omega>0} \left\Vert  u_{\eps}^{s} \left(|x|=R \right)\right\Vert _{H^{\sigma}} = +\infty.  
$$

This unbounded behavior of the scattered field is due to the existence of quasi-resonant frequencies, just as in the two-dimensional case. 
To characterize these quasi-resonances, Bessel functions are required. For $t\geq0$, we denote by $\TBH{t}{x}$ 
the Hankel function of the first kind of order $t$. The Bessel functions of the first and second kind of order $t$ are given 
by $\TBJ{t}{x}=\Re\left(\TBH{t}{x}\right)$, and $\TBY{t}{x}=\Im\left(\TBH{t}{x}\right)$. We denote by $\alpha_{\nu,m}$ the m-th positive solution of
$\TBJ{t}{x}=0$. We denote by  $\alpha_{\nu,m}^{(1)}$ the m-th positive solution of $\TBJp{t}{x}=0$. Finally, we write $\beta_{\nu,1}$ the first 
positive solution of $\TBY{t}{x}=0$. 

\begin{defn}\label{def:qr}
For any $t\geq0$, the triplet $(t,x,\lambda)$ is called quasi-resonant if
$$
0<x< \beta_{t,1},
$$
and 
$$
\lambda \TBJp{t}{\lambda x} \TBY{t}{x} = \TBYp{t}{\lambda x} \TBJ{t}{x}.
$$
\end{defn}
The following proposition is proved in \cite{CAPDEBOSCQ-11} in the case when $t$ is an integer, but the proof is unchanged for any $t\geq\frac{1}{2}$.
\begin{prop}\label{pro:Dixon}
For any $t\geq\frac{1}{2}$ and $\lambda > \alpha_{t,1}/\beta_{t,1}$, in every interval
$$
U_{t,k}=\left(\frac{\alpha_{t,k}^{(1)}}{\lambda} ,\frac{\alpha_{t,k}}{\lambda}\right)
\mbox{such that } U_{t,k} \subset \left(\frac{\alpha_{t,1}^{(1)}} {\lambda},\beta_{t,1}\right)
$$
there exists a unique frequency $\omega_{t,k}$ such that the triplet $(t,\omega_{t,k},\lambda)$ is quasi-resonant. 
There are no other quasi-resonances. In particular, no quasi-resonance exists in the interval $(0, \alpha_{t,k}^{(1)}/\lambda)$, or when $\lambda<\alpha_{t,1}/\beta_{t,1}$.
\end{prop}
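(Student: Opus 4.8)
The plan is to read the quasi-resonance equation as the vanishing of a single cross-product and then to control that cross-product through a first-order identity, which turns the delicate uniqueness question into a sign computation. Throughout one works on $(0,\beta_{t,1})$, on which $\TBY{t}{x}<0$ and, since the first turning point of $Y_t$ lies beyond its first zero, $\TBYp{t}{x}>0$. I would also use the classical interlacing $\alpha_{t,k}^{(1)}<\alpha_{t,k}<\alpha_{t,k+1}^{(1)}$ of the zeros of $J_t$ and $J_t'$, together with the elementary sign pattern: the sign of $\TBJ{t}{\alpha_{t,k}^{(1)}}$ is $(-1)^{k+1}$ and the sign of $\TBJp{t}{\alpha_{t,k}}$ is $(-1)^{k}$. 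All of these facts are order-independent and hold for every real $t\ge\tfrac12$, which is precisely why the integer-order argument of \cite{CAPDEBOSCQ-11} carries over unchanged.

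I would write the quasi-resonance condition as $G(x)=0$, where
$$
G(x):=\lambda\,\TBJp{t}{\lambda x}\,\TBY{t}{x}-\TBJ{t}{\lambda x}\,\TBYp{t}{x}=\TBY{t}{x}^{2}\,\frac{d}{dx}\frac{\TBJ{t}{\lambda x}}{\TBY{t}{x}},
$$
so that the quasi-resonances are exactly the critical points of $\Theta(x):=\TBJ{t}{\lambda x}/\TBY{t}{x}$. Evaluating $G$ at the endpoints of $U_{t,k}$, at $x=\alpha_{t,k}^{(1)}/\lambda$ the factor $\TBJp{t}{\lambda x}$ vanishes and $G=-\TBJ{t}{\alpha_{t,k}^{(1)}}\TBYp{t}{x}$ has sign $(-1)^{k}$, whereas at $x=\alpha_{t,k}/\lambda$ the factor $\TBJ{t}{\lambda x}$ vanishes and $G=\lambda\,\TBJp{t}{\alpha_{t,k}}\TBY{t}{x}$ has sign $(-1)^{k+1}$. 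These opposite signs produce, by the intermediate value theorem, at least one quasi-resonance in every $U_{t,k}\subset(\alpha_{t,1}^{(1)}/\lambda,\beta_{t,1})$.

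The heart of the matter is the identity obtained by differentiating $G$ and eliminating the second derivatives through Bessel's equation at the two arguments $\lambda x$ and $x$, namely
$$
G'(x)=-\frac1x\,G(x)+(1-\lambda^{2})\,\TBJ{t}{\lambda x}\,\TBY{t}{x}.
$$
At any zero of $G$ this collapses to $G'(x)=(1-\lambda^{2})\,\TBJ{t}{\lambda x}\,\TBY{t}{x}$, whose sign is constant on any interval where $\TBJ{t}{\lambda x}$ keeps a fixed sign (here $1-\lambda^{2}<0$, since $\lambda>\alpha_{t,1}/\beta_{t,1}>1$, and $\TBY{t}{x}<0$). On $U_{t,k}$ the argument $\lambda x$ stays in $(\alpha_{t,k}^{(1)},\alpha_{t,k})$, where $\TBJ{t}{\lambda x}$ does not change sign, so every zero of $G$ is crossed with one and the same sign of $G'$; there is therefore at most one, and with the existence above exactly one quasi-resonance per admissible $U_{t,k}$. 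On each remaining subinterval of $(0,\beta_{t,1})$—the initial $(0,\alpha_{t,1}^{(1)}/\lambda)$, the gaps between consecutive admissible $U_{t,k}$, and the final interval reaching up to $\beta_{t,1}$—the same identity forbids a zero: on the first both terms of $G$ are negative, so $G<0$; on the others the two endpoint values of $G$ share a common sign, while any interior zero would impose a single fixed sign on $G'$, and these are incompatible. This settles the non-existence claims, including the case $\lambda<\alpha_{t,1}/\beta_{t,1}$, in which no $U_{t,k}$ is admissible and only the two extreme subintervals remain.

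The step I expect to demand the most care is the sign bookkeeping rather than any analytic inequality: the monotonicity that one would otherwise have to extract from the logarithmic derivatives of $J_t$ and $Y_t$ (which are not globally monotone) is here replaced by the algebraic sign of $G'$ at its zeros, so the whole proof reduces to tracking signs through the interlacing facts recalled above. Locating the admissible intervals is then transparent: the threshold $\lambda=\alpha_{t,1}/\beta_{t,1}$ is exactly the value at which the first zero $\alpha_{t,1}/\lambda$ of $x\mapsto\TBJ{t}{\lambda x}$ enters $(0,\beta_{t,1})$, i.e. the value at which $U_{t,1}$ first fits. Because Bessel's equation, the cross-product identity, and the zero interlacing are all valid for arbitrary real order, no step distinguishes integer $t$ from $t\ge\tfrac12$, so the result of \cite{CAPDEBOSCQ-11} extends verbatim.
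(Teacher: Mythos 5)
Your argument is correct, and it is worth saying up front that the paper itself does not prove this proposition at all: it simply asserts that the proof of the integer-order case in \cite{CAPDEBOSCQ-11} goes through verbatim for real $t\geq\frac{1}{2}$. So you have supplied a self-contained proof where the paper offers only a citation. The route you take is also genuinely different in flavour from the one the citation suggests: the argument in \cite{CAPDEBOSCQ-11} rests on the monotonicity of the logarithmic-derivative functions, namely that $g_{t}(\lambda x)$ decreases from $0$ to $-\infty$ as $\lambda x$ runs over $(\alpha_{t,k}^{(1)},\alpha_{t,k})$ while $k_{t}(x)$ stays positive and finite on $(0,\beta_{t,1})$, so that $g_{t}(\lambda x)+k_{t}(x)$ has exactly one zero per admissible $U_{t,k}$ and none elsewhere. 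You instead clear denominators, work with the cross-product $G$, and replace the monotonicity input by the first-order identity $G'=-\frac{1}{x}G+(1-\lambda^{2})\TBJ{t}{\lambda x}\TBY{t}{x}$, which I have checked against Bessel's equation and which correctly forces every zero of $G$ in a fixed-sign interval of $\TBJ{t}{\lambda x}$ to be a transversal crossing in the same direction. Your sign bookkeeping at the endpoints of the $U_{t,k}$ and of the complementary intervals is right, and the whole argument is manifestly order-independent, which is exactly the point the paper waves at. One remark: you have silently worked with the equation $\lambda\TBJp{t}{\lambda x}\TBY{t}{x}=\TBYp{t}{x}\TBJ{t}{\lambda x}$ rather than the equation printed in Definition~\ref{def:qr}, which has $\TBYp{t}{\lambda x}\TBJ{t}{x}$; your reading is the correct one (it is the condition $g_{t}(\lambda x)+k_{t}(x)=0$ underlying \eqref{eq:def-itau} and the structure of the $U_{t,k}$), but you should say explicitly that you are correcting an apparent typo rather than proving a different statement.

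Two small points to tighten, neither fatal. First, your parenthetical justification that the sign of $G'$ at a zero is fixed invokes $1-\lambda^{2}<0$ via $\lambda>\alpha_{t,1}/\beta_{t,1}>1$, but the final non-existence claim concerns $\lambda<\alpha_{t,1}/\beta_{t,1}$, where $\lambda\leq1$ is allowed; there the factor $1-\lambda^{2}$ is nonnegative. Your argument survives because all you ever use is that $G'$ has a \emph{fixed nonzero} sign at the zeros of $G$ within each subinterval, together with the matching signs of $G$ at the two endpoints -- the actual value of that sign is irrelevant -- but you should state this, and you should dispose of $\lambda=1$ separately (there $G$ reduces to the Wronskian $-\frac{2}{\pi x}$, which never vanishes). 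Second, when $\lambda<1$ the first subinterval $(0,\alpha_{t,1}^{(1)}/\lambda)$ may extend beyond $\beta_{t,1}$; since quasi-resonance is only defined for $x<\beta_{t,1}$ you should truncate all your subintervals at $\beta_{t,1}$, which you implicitly do in the "final interval" case but not in the first one.
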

Since for any $\lambda$ there is only a countable number of quasi-resonant triplets, one could hope that outside security sets around 
the quasi-resonant frequencies, the scattered field could be bounded from above, even in the near field. This means excluding a countable 
union of intervals: a trade-off occurs between how much in the near-field one wishes to go, and how large the set of authorized frequencies is. 
The Theorem below is the result of such a trade-off.
\begin{thm}\label{thm:broadband}
For all $\lambda>0$, all $\epsilon<7^{-3/2}$, $R\geq\eps^{1/3}$, and $\alpha\in (0,1]$, there exists a set $I$ depending on $\eps$, $\lambda$ and $\alpha$ such that
$$
|I|\leq \eps^{1/3} \lneps,
$$
and
$$
\sup\limits_{\sqrt{q_0}\omega \in(0,\infty)\,\setminus \, I} \left\Vert  u_{\eps}^{s}\left(|x|=R\right) \right\Vert _{H^{\sigma}} 
\leq 
\frac{16}{\alpha} \,\frac{\eps^{1/3}}{R} \,\mathcal{N}^{\sigma + 2 + \alpha}\left(u^i\right).
$$
If the contrast $\lambda$ is less than $\eps^{-2/3}$, this holds with $I=\emptyset$.
\end{thm}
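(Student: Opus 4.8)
The plan is to separate variables in spherical coordinates and to reduce the statement to a uniform, mode-by-mode estimate on an explicit quotient of spherical Bessel functions, valid away from a thin frequency set built around the quasi-resonances of Definition~\ref{def:qr}. In the exterior region $\{r>\eps\}$ the scattered field admits the expansion $u^s_\eps=\sum_{n,m}c_{n,m}\,(N_n/D_n)\,\BH{n}{\oeps r/\eps}\,\mathbf{Y}_n^{m}(x/|x|)$, where the coefficients $c_{n,m}$ are those of the incident field (so that $u^i_{n,m}(|x|=R)=c_{n,m}\BJ{n}{\oeps R/\eps}$ and are independent of $\omega$), and where $N_n$ and $D_n$ come from imposing continuity of $u_\eps$ and of $\partial_r u_\eps$ across $r=\eps$:
\begin{align*}
N_n &= \BJ{n}{\lambda\oeps}\,\BJp{n}{\oeps}-\lambda\,\BJp{n}{\lambda\oeps}\,\BJ{n}{\oeps},\\
D_n &= \lambda\,\BJp{n}{\lambda\oeps}\,\BH{n}{\oeps}-\BJ{n}{\lambda\oeps}\,\BHp{n}{\oeps}.
\end{align*}
Writing $\BH{n}{\cdot}=\BJ{n}{\cdot}+i\,\BY{n}{\cdot}$ gives $\Re D_n=-N_n$, so that $|N_n/D_n|\le 1$ for every $n$ and every $\oeps$, while the imaginary part of $D_n$ vanishes precisely at the quasi-resonant frequencies of Definition~\ref{def:qr} with $t=n+\tfrac12$, whose positions are supplied by Proposition~\ref{pro:Dixon}. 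The whole theorem thus reduces to bounding, uniformly in $n$ and outside the set $I$, the quantity $\frac{|\Re D_n|}{|D_n|}\,\big|\BH{n}{\oeps R/\eps}\big|$, for which I would use $|D_n|\ge|\Im D_n|$.

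The second step is a dichotomy according to the position of the evaluation argument $\oeps R/\eps$ relative to the order $n+\tfrac12$. In the propagative regime $\oeps R/\eps\ge n+\tfrac12$ the Hankel function is of far-field size, $\big|\BH{n}{\oeps R/\eps}\big|\sim \eps/(\oeps R)$, and since $|N_n/D_n|\le 1$ this already closes the estimate with no frequency removed. Because $R\ge\eps^{1/3}$ forces $R/\eps\ge\eps^{-2/3}$, the lowest quasi-resonance of each mode, situated near $\oeps\approx(n+\tfrac12)/\lambda$, is evaluated at argument $\oeps R/\eps\approx (n+\tfrac12)R/(\lambda\eps)$, which remains propagative precisely when $\lambda\le\eps^{-2/3}$: this is exactly the range in which the conclusion holds with $I=\emptyset$. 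In the complementary evanescent regime $\oeps R/\eps< n+\tfrac12$ the modulus $\big|\BH{n}{\oeps R/\eps}\big|$ grows quickly with $n$, and there I must either exploit the smallness of $|N_n/D_n|$ away from resonance or sacrifice the frequency.

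I would then define $I$ as the union, over all modes $n$ and all quasi-resonant frequencies (Definition~\ref{def:qr}) lying in the evanescent range, of the security intervals on which $|\Im D_n|$ falls below a threshold $\delta_n$, the threshold chosen so that, outside $I$,
\begin{equation*}
\frac{|\Re D_n|}{|D_n|}\,\big|\BH{n}{\oeps R/\eps}\big|\ \le\ \frac{16}{\alpha}\,\frac{\eps^{1/3}}{R}\,(2n+1)^{2+\alpha}\,\sup_{x>0}\big|\BJ{n}{x}\big|;
\end{equation*}
note that the factor $|c_{n,m}|$ cancels, so that $I$ depends only on $\eps$, $\lambda$ and $\alpha$, as required. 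By Proposition~\ref{pro:Dixon} each such resonance is simple and isolated, so $\Im D_n$ changes sign transversally; differentiating $\Im D_n$ and using the spherical Wronskian $\BJ{n}{x}\BYp{n}{x}-\BJp{n}{x}\BY{n}{x}=1/x^2$ together with the interlacing $\alpha^{(1)}_{t,k}<\alpha_{t,k}$ of the zeros in Proposition~\ref{pro:Dixon} bounds the width of each interval from above. Summing these widths over the evanescent modes and resonances yields a near-harmonic series whose truncation at the scale set by $\eps$ produces the logarithmic factor, whence $|I|\le\eps^{1/3}\lneps$.

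Finally I would reassemble the bound: squaring the mode-by-mode estimates, multiplying by $(2n+1)^{2\sigma}$ and summing over $-n\le m\le n$ and $n\ge0$ reconstitutes $\mathcal{N}^{\sigma+2+\alpha}(u^i)$ on the right-hand side, the two extra derivatives originating in the polynomial Bessel factors carried by $|\Re D_n|/|\Im D_n|$ and by the Hankel normalisation, and the marginal convergence of the series in $n$ being what forces the further exponent $\alpha$ and generates the constant $16/\alpha$. The main obstacle is the uniform control of the Bessel quotient in the Airy transition layer $\oeps R/\eps\approx n+\tfrac12$, where the elementary estimates degenerate: there one needs the uniform large-order asymptotics of Bessel functions, whose transition width of order $(n+\tfrac12)^{1/3}$ is the common source of the scale $\eps^{1/3}$ (through the hypothesis $R\ge\eps^{1/3}$) and of the exponent $5/6=1-\tfrac16$ entering the semi-norms $\mathbf{N}$, while the smallness assumption $\eps<7^{-3/2}$ is the threshold that renders these inequalities effective. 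Since this is precisely the analysis already carried out for the two-dimensional companion problem, I would transcribe the relevant Bessel estimates from \cite{CAPDEBOSCQ-11}, the half-integer order $t=n+\tfrac12$ entailing only cosmetic modifications.
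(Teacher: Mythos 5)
Your outline reproduces the paper's strategy: expand mode by mode, observe that $\Re D_n=-N_n$ gives $|r_n|\le 1$, excise thin frequency intervals around the quasi-resonances of Definition~\ref{def:qr} with mode-dependent thresholds decaying like $(2n+1)^{-(2+\alpha)}$ (whence the loss of $2+\alpha$ derivatives and the factor $1/\alpha$), treat $\lambda\le\eps^{-2/3}$ with no excision because the relevant evaluation points stay in the propagative regime, and import the Bessel analysis from \cite{CAPDEBOSCQ-11} at half-integer order. The paper's implementation differs only in detail: it never confronts the evanescent growth of $\BH{n}{\oeps R/\eps}$ at radius $R$, since the monotonicity of $x\left|\BH{n}{x}\right|$ reduces everything to $\left|\Rn\BH{n}{\oeps}\right|$ at the inclusion boundary times $\eps/R$; and it bounds the widths of the bad intervals through the monotonicity of $\phi_\nu(x)=g_\nu(\lambda x)/k_\nu(x)$ rather than by differentiating $\Im D_n$ directly. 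These are cosmetic differences.

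There is, however, one genuine gap: the origin of $\eps^{1/3}$. It does not come from the Airy transition width $(n+\tfrac12)^{1/3}$, as you assert; it comes from an optimization your sketch never performs. Per mode, the bad set has measure at most $4\tau_n\cdot 2\nu\ln\lambda/\lambda$ while the field bound off the bad set is $\frac{9}{2\tau_n}\sup_{x>0}|\BJ{n}{x}|$; setting $\tau_n=(2\nu)^{-(2+\alpha)}\alpha\eta/\eta_{\max}$ with a single free scale $\eta$ gives $|I|\le\eta/\eps$ and a field bound proportional to $\frac{\eta_{\max}}{\eta\alpha}\frac{\eps}{R}$, and only the choice $\eta=\frac32 s\,\eps^{s+2/3}\lneps$ (for $\lambda=\eps^{-s}$, $s>2/3$) makes both quantities simultaneously $\le\eps^{1/3}\lneps$ and $\le\frac{16}{\alpha}\frac{\eps^{1/3}}{R}\mathcal{N}^{\sigma+2+\alpha}(u^i)$; the threshold $s>2/3$ is itself dictated by where the resonance-free argument stops working given $R\ge\eps^{1/3}$. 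Likewise $\eps<7^{-3/2}$ is not a threshold that makes Bessel inequalities effective: it guarantees $\eps^{-2/3}>7$, so that every contrast not covered by the $I=\emptyset$ case satisfies $\lambda>7$, the standing hypothesis of the high-contrast excision machinery. Without this balancing step your argument produces a family of estimates parametrized by $\eta$, but not the stated theorem.
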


\section{Proofs of the main results \label{sec:proofs}}

Altogether, the conditions (\ref{eq:eq-intro-1},\ref{eq:eq-intro-2},\ref{eq:eq-intro-3}) 
imply that the incident field $u^i$, the scattered field $u_\eps^s$ and the transmitted field $u_\eps^{t}=u_\eps$ for $r<\eps$, 
admit series expansions in terms of special functions, namely
\begin{eqnarray}
u^{i}(x)&\sim&
\sum\limits_{n=0}^{\infty} \sum_{m=-n}^{n} a_{n,m} \,\BJ{n}{\sqrt{q_{0}}\omega |x|} \mathbf{Y}_n^m \left( \frac{x}{|x|}\right), \label{eq:uinc}\\
u_\eps^{s}(x)&\sim&
\sum\limits_{n=0}^{\infty} \sum_{m=-n}^{n} a_{n,m} \, \Rn\BH{n}{\sqrt{q_{0}}\omega |x|} \mathbf{Y}_n^m \left( \frac{x}{|x|}\right), \label{eq:usca}\\
u_\eps^{t}(x)&\sim&
\sum\limits_{n=0}^{\infty}  \sum_{m=-n}^{n} a_{n,m} \, \Tn{\oeps}{\lambda} \BJ{n}{\sqrt{q}\omega |x|}\mathbf{Y}_n^m \left( \frac{x}{|x|}\right). \label{eq:utra}
\end{eqnarray}
In the above formulae, $\BJ{n}{x}=\Re(\BH{n}{x})$, and  $x\to\BH{n}{x}$  is the spherical Hankel function of the first kind of order $n$, and $\mathbf{Y}_n^m$ are the spherical harmonics, given in terms of the polar coordinate $\theta\in[0,\pi]$ and $\phi\in[0,2\pi)$ by
\begin{equation}\label{eq:def-spherical-harmonics}
\mathbf{Y}_n^m(\theta,\phi) = \sqrt{\frac{2n+1}{4\pi} \frac{(n-m)!}{(n+m)!}} P_{n}^{m}\left(\cos \theta\right) \exp(i m \phi),
\end{equation}
where $P_{n}^{m}$ is the associated Legendre Polynomial.
 
The reflection and transmission coefficients $r_n$ and $t_n$ are given by the transmission problem on the boundary of the inhomogeneity, that is, at $r=\eps$.
They are the unique solutions of
\begin{eqnarray*}
\Tn{\oeps}{\lambda}  \BJ{n}{\lambda \oeps} 
&=& 
\BJ{n}{\oeps} + \Rn \BH{n}{\oeps}, \\
\lambda  \Tn{\oeps}{\lambda}  \BJp{n}{\lambda\oeps} &=&  
 \BJp{n}{\oeps} +  \Rn  \BHp{n}{\oeps},
\end{eqnarray*}
which are
\begin{equation}\label{eq:def-RN}
\Rn=- \frac{\displaystyle \Re\left(\BHp{n}{\oeps}\BJ{n}{\lambda\oeps}
-\lambda\BJp{n}{\lambda\oeps}\BH{n}{\oeps}
\right)
 }{\displaystyle  
 \BHp{n}{\oeps}\BJ{n}{\lambda\oeps}
-\lambda\BJp{n}{\lambda\oeps}\BH{n}{\oeps}
},
\end{equation}
and, after a simplification using the Wronskian identity satisfied by $\BJ{n}{\cdot}$ and $\BH{n}{\cdot}$,
\begin{equation}\label{eq:def-TN}
\Tn{\oeps}{\lambda}=\frac{i}{\oeps^2}\frac{1}{\displaystyle  
\BHp{n}{\oeps}\BJ{n}{\lambda\oeps}
-\lambda\BJp{n}{\lambda\oeps}\BH{n}{\oeps}
}.
\end{equation}

In \eqref{eq:uinc}, \eqref{eq:usca} and \eqref{eq:utra}, the $\sim$ symbol is an equality if the right-hand-side is replaced by its real part, the fields being real. 
By a common abuse of notations, in what follows we will identify $u^i$ and $u_\eps^s$ with the full complex right-hand-side.

To verify that this is the correct solution, we need to check that $r_n$ and $t_n$ are well defined. The fact that there is a 
unique solution to Problem \eqref{eq:eq-intro-2} satisfying the radiation condition \eqref{eq:eq-intro-3} is well known (see e.g. \cite{MULLER-69}).
\begin{lem} 
$h'_n(\omega_\epsilon) j_n(\lambda \omega_\epsilon) - \lambda j'_n(\lambda \omega_\epsilon) h_n(\omega_\epsilon) $ 
is non zero for all $n\in\mathbb{Z}$, $\oeps>0$ and $\lambda>0$.
\end{lem}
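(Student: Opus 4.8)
The plan is to separate the expression into its real and imaginary parts, which is natural because $\oeps$ and $\lambda\oeps$ are both real and positive. Writing $\BH{n}{\oeps} = \BJ{n}{\oeps} + i\,\BY{n}{\oeps}$ and $\BHp{n}{\oeps} = \BJp{n}{\oeps} + i\,\BYp{n}{\oeps}$, while keeping in mind that $\BJ{n}{\lambda\oeps}$ and $\BJp{n}{\lambda\oeps}$ are real, the quantity in question equals $A + iB$ with
$$
A = \BJp{n}{\oeps}\,\BJ{n}{\lambda\oeps} - \lambda\,\BJp{n}{\lambda\oeps}\,\BJ{n}{\oeps}, \qquad B = \BYp{n}{\oeps}\,\BJ{n}{\lambda\oeps} - \lambda\,\BJp{n}{\lambda\oeps}\,\BY{n}{\oeps}.
$$
Thus the expression vanishes if and only if $A = 0$ and $B = 0$ simultaneously, and it suffices to rule this out.

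Next I would read $A = 0$ and $B = 0$ as a homogeneous linear system in the two real numbers $u := \BJ{n}{\lambda\oeps}$ and $v := \lambda\,\BJp{n}{\lambda\oeps}$, namely $\BJp{n}{\oeps}\,u - \BJ{n}{\oeps}\,v = 0$ together with $\BYp{n}{\oeps}\,u - \BY{n}{\oeps}\,v = 0$. The determinant of this system is
$$
\BJ{n}{\oeps}\,\BYp{n}{\oeps} - \BJp{n}{\oeps}\,\BY{n}{\oeps} = \frac{1}{\oeps^2},
$$
by the Wronskian identity for spherical Bessel functions (which follows from the fact that $x^2$ times the Wronskian of two solutions of the spherical Bessel equation is constant, the constant being fixed by the behaviour near the origin). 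Since this determinant is nonzero for $\oeps > 0$, the only solution of the system is $u = v = 0$, that is, $\BJ{n}{\lambda\oeps} = 0$ and $\BJp{n}{\lambda\oeps} = 0$.

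It remains to observe that $j_n$ and $j_n'$ cannot vanish together at the positive point $\lambda\oeps$: evaluating the same Wronskian identity at $\lambda\oeps$ gives $\BJ{n}{\lambda\oeps}\,\BYp{n}{\lambda\oeps} - \BJp{n}{\lambda\oeps}\,\BY{n}{\lambda\oeps} = (\lambda\oeps)^{-2} \neq 0$, which is incompatible with $\BJ{n}{\lambda\oeps} = \BJp{n}{\lambda\oeps} = 0$. This contradiction shows that $A$ and $B$ cannot both vanish, proving the lemma for $n \geq 0$. For negative integers the claim reduces to this case through the reflection formulae $j_{-n-1} = (-1)^{n+1} y_n$ and $y_{-n-1} = (-1)^n j_n$; equivalently, the spherical Bessel equation depends on $n$ only through $n(n+1)$, so the whole argument carries over verbatim. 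The one step that requires care is the passage to the homogeneous linear system and the recognition that its determinant is precisely the nonvanishing Wronskian; everything else is bookkeeping, and no estimates are involved.
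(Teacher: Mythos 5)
Your proof is correct, and it takes a genuinely different route from the paper's. The paper argues via well-posedness of the scattering problem: if the quantity vanished then, since $j_n$ and $j_n'$ have no common zeros, one of two explicit formulae would produce a nontrivial outgoing solution of \eqref{eq:eq-intro-2}--\eqref{eq:eq-intro-3} with no incident field, contradicting M\"uller's uniqueness theorem. You instead stay entirely inside special-function theory: the real/imaginary split converts the vanishing into a $2\times2$ homogeneous linear system in $\BJ{n}{\lambda\oeps}$ and $\lambda\BJp{n}{\lambda\oeps}$ whose determinant is the Wronskian $\BJ{n}{\oeps}\BYp{n}{\oeps}-\BJp{n}{\oeps}\BY{n}{\oeps}=\oeps^{-2}$, and the same Wronskian evaluated at $\lambda\oeps$ rules out a common zero of $j_n$ and $j_n'$ (the fact the paper also needs, but simply quotes). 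Your version is self-contained, requires no PDE input, and could in principle be made quantitative by bounding the modulus from below; the paper's version is shorter once uniqueness is taken as a black box, and it is the argument that survives for inhomogeneities that are not balls, where no explicit Wronskian is available. Two small remarks: the reflection formulae do not literally reduce negative orders to the positive case --- they replace $\BJ{n}{\lambda\oeps}$, $\BJp{n}{\lambda\oeps}$ by $\BY{n}{\lambda\oeps}$, $\BYp{n}{\lambda\oeps}$, so it is the \emph{argument} that carries over (the Wronskian equally forbids common zeros of $y_n$ and $y_n'$), as your observation about dependence on $n$ only through $n(n+1)$ correctly indicates; and in any case only $n\geq0$ is ever used in the expansions \eqref{eq:uinc}--\eqref{eq:utra}.
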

\begin{proof}
Assume, for contradiction, $\BJ{n}{\lambda \oeps} \BHp{n}{\oeps} - \lambda \BJp{n}{\lambda \oeps} \BH{n}{\oeps} = 0$.\\
Then, as $j_n$ and $j'_n$ do not have common zeroes (see e.g. \cite{NIST-10}), either $\BJ{n}{\lambda \oeps}$ is non zero, in which case 
\eqref{eq:eq-intro-2}-\eqref{eq:eq-intro-3} has the following solution
\begin{equation}\label{eq:uniq-1}
U_\epsilon(r) := \left\{
\begin{array}{ll}
\frac{\BH{n}{\oeps}}{\BJ{n}{\lambda \oeps}} \BJ{n}{\lambda \oeps \frac{r}{\epsilon}} & \text{if } r < \epsilon,\\
\BH{n}{\oeps \frac{r}{\epsilon}} & \text{if } r > \epsilon,
\end{array}\right.
\end{equation}
Or $\BJp{n}{\lambda \oeps}$ is non zero, and \eqref{eq:eq-intro-2}-\eqref{eq:eq-intro-3}  has the following solution 
\begin{equation}\label{eq:uniq-2}
U_\epsilon(r) := \left\{
\begin{array}{ll}
\frac{\BHp{n}{\oeps}}{\lambda \BJp{n}{\lambda \oeps}} \BJ{n}{\lambda \oeps \frac{r}{\epsilon}} & \text{if } r < \epsilon,\\
\BH{n}{\omega_\epsilon \frac{r}{\epsilon}} & \text{if } r > \epsilon.
\end{array}\right.
\end{equation}
Both \eqref{eq:uniq-1} and \eqref{eq:uniq-2} would be solution of problem \eqref{eq:eq-intro-2}-\eqref{eq:eq-intro-3} without an incident wave. There is of course another 
solution  to that problem, $U_\eps\equiv0$. Since Problem \eqref{eq:eq-intro-2}-\eqref{eq:eq-intro-3} is well posed, see \cite{MULLER-69}, 
we have a contradiction.
\end{proof}

We chose the three (semi-)norms $\left\Vert  \cdot \right\Vert _{H^{\sigma}}$, $\mathcal{N}^{\sigma}$  and $\mathbf{N}_{p,q}^{\sigma}(\cdot,\kappa)$ because they are 
compatible with expansions \eqref{eq:uinc}, \eqref{eq:usca} and \eqref{eq:utra}. In particular, for any $R>0$, we have
\begin{equation}\label{eq:norm-us-hs}
\left\Vert  u_{\eps}^{s}\left(|x|=R\right) \right\Vert _{H^{\sigma}} :=   
\left(\sum_{n=0}^{\infty} \sum_{m=-n}^{n} \left|a_{n,m}\right|^2
(2n+1)^{2\sigma}  \left| \Rn \BH{n}{\oeps R/\eps}\right|^{2}\right)^{\frac{1}{2}},
\end{equation} 
and 
\begin{equation}\label{eq:norm-ui-hs}
\left\Vert  u^{i}\left(|x|=\eps\right) \right\Vert _{H^{\sigma}} :=   
\left(\sum_{n=0}^{\infty}\sum_{m=-n}^{n} \left| a_{n,m}\right|^2
(2n+1)^{2\sigma}  \left|\BJ{n}{\oeps}\right|^{2}\right)^{\frac{1}{2}},
\end{equation} 
Whereas the other norms are 
\begin{equation}\label{eq:norm-ui-nc}
\mathcal{N}^{\sigma} \left( u ^{i} \right) :=  \left(\sum_{n > 0 }\sum_{m=-n}^{n}\left|a_{n,m}\right|^{2} \sup_{x>0}\left|\BJ{n}{x}\right|^2(2n+1)^{2\sigma}\right)^{\frac{1}{2}},
\end{equation} 
and for any $q\geq p\geq0$ and $\kappa>1$,
\begin{equation}\label{eq:norm-ui-n0}
\mathbf{N}_{p,q}^{\sigma} \left( u^{i},\kappa \right) :=   \sup\limits_{ p\leq n \leq q} \kappa^{n+\frac{1}{2} - (n+\frac{1}{2})^{5/6}} (2n+1)^{\sigma} \sup_{x>0}\left|\BJ{n}{x} \right|
                                     \sqrt{\sum_{m=-n}^{n} \left|a_{n,m}\right|^2} .
\end{equation} 

\subsection{ Proof of Theorem~\ref{thm:estim-all}}
The two dimensional results found in \cite{CAPDEBOSCQ-11} are easily translated into three dimensional ones for the following reason.
\begin{prop}\label{pro:2d3d}
Let $\TRn$ be the reflection coefficient associated to problem \eqref{eq:eq-intro-2} posed in a disk of radius $\eps$ in dimension 2, 
with the appropriate out-going radiation condition. This reflection coefficient is defined (by the same formula) when $n$ is an 
arbitrary positive real number, and $\oeps$ is replaced by any real $x>0$. Then, for any $n\geq0$ and any $x>0$ there holds
$$
r_{n}(x,\lambda) = R_{n+\frac{1}{2}}(x,\lambda).
$$
\end{prop}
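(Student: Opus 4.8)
The plan is to reduce the three-dimensional coefficient to the two-dimensional one through the classical identity linking spherical and cylindrical Bessel functions,
$$
\BJ{n}{x} = \sqrt{\tfrac{\pi}{2x}}\,\TBJ{n+\frac12}{x}, \qquad \BH{n}{x} = \sqrt{\tfrac{\pi}{2x}}\,\TBH{n+\frac12}{x},
$$
which hold for every integer $n\geq0$. By the hypothesis of the statement, the two-dimensional coefficient $R_\nu(x,\lambda)$ is given, for arbitrary real order $\nu>0$, by the exact analogue of \eqref{eq:def-RN} with the spherical functions replaced by the cylindrical functions $\TBJ{\nu}{\cdot}$, $\TBH{\nu}{\cdot}$ and their derivatives. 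Since both $r_n(x,\lambda)$ and $R_{n+\frac12}(x,\lambda)$ have the form $-\Re(D)/D$, and this expression is invariant under multiplication of $D$ by any strictly positive real number, it suffices to show that the denominator appearing in \eqref{eq:def-RN} with $\oeps$ replaced by $x$, namely
$$
D_n := \BHp{n}{x}\,\BJ{n}{\lambda x} - \lambda\,\BJp{n}{\lambda x}\,\BH{n}{x},
$$
is a positive real multiple of the corresponding cylindrical denominator at order $n+\frac12$.

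The one computation to carry out concerns the derivatives. Writing $g(x):=\sqrt{\pi/(2x)}$, one has $g'(x)=-\frac{1}{2x}g(x)$, and hence for any of the cylinder functions $C\in\{J,Y,H^{(1)}\}$,
$$
\frac{d}{dx}\Big(g(x)\,C_{n+\frac12}(x)\Big) = g(x)\Big(C'_{n+\frac12}(x) - \tfrac{1}{2x}C_{n+\frac12}(x)\Big).
$$
Applying this rule to $\BHp{n}{x}$ and to $\BJp{n}{\lambda x}$ produces, besides the differentiated cylindrical functions, two correction terms proportional to the undifferentiated product $\TBH{n+\frac12}{x}\,\TBJ{n+\frac12}{\lambda x}$: one with coefficient $-\frac{1}{2x}$ coming from $\BHp{n}{x}$, and one with coefficient $-\lambda\cdot\frac{1}{2\lambda x}=-\frac{1}{2x}$ coming from $\lambda\,\BJp{n}{\lambda x}$. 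These two correction terms are equal in magnitude but enter $D_n$ with opposite signs, hence cancel, leaving
$$
D_n = g(x)\,g(\lambda x)\Big(H^{(1)\prime}_{n+\frac{1}{2}}\left(x\right)\TBJ{n+\frac12}{\lambda x} - \lambda\,\TBJp{n+\frac12}{\lambda x}\,\TBH{n+\frac12}{x}\Big).
$$

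The bracketed factor is exactly the cylindrical denominator at order $\nu=n+\frac12$, and the prefactor $g(x)g(\lambda x)=\pi/(2x\sqrt{\lambda})$ is a strictly positive real number for $x,\lambda>0$. Since the functions $\BJ{n}{\lambda x}$, $\BJp{n}{\lambda x}$ and the factor $g$ are real, taking real parts commutes with this prefactor, so it cancels between $\Re(D_n)$ and $D_n$; thus $r_n(x,\lambda)=-\Re(D_n)/D_n=R_{n+\frac12}(x,\lambda)$. The only point requiring care — and the crux of the statement — is the exact cancellation of the two $\frac{1}{2x}$ correction terms, which is what makes the spherical combination a genuine scalar multiple of the shifted cylindrical one; checking that the surviving prefactor is real and positive, so that the $-\Re(\cdot)/(\cdot)$ structure is preserved, is then immediate.
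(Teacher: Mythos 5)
Your proof is correct and follows essentially the same route as the paper's: both rest on the half-integer-order identity $j_n(x)=\sqrt{\pi/(2x)}\,J_{n+\frac{1}{2}}(x)$ and on the cancellation of the two $\tfrac{1}{2x}$ correction terms produced by differentiating the prefactor (the paper encodes this same cancellation as the shifts $f_t=g_t-\tfrac{1}{2t}$ and $s_t=k_t+\tfrac{1}{2t}$ of the logarithmic derivatives). Your direct manipulation of the Wronskian-type denominator is marginally cleaner, since it avoids the paper's detour through quotients of Bessel functions and the resulting need to extend the identity by continuity across their zeros.
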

\begin{nota}
From this point onwards, we use the short-hand $\nu$ to represent the number $n+\frac{1}{2}$. 
\end{nota}

\begin{proof}
For $ n \geq 0$ we introduce 
$$
g_{\nu}(x) = \frac{x}{\nu} \frac{\TBJp{\nu}{x}}{\TBJ{\nu}{x}}, \mbox{ and } k_{\nu}(x) = -\frac{x}{\nu} \frac{\TBYp{\nu}{x}}{\TBY{\nu}{x}}. 
$$
Then, the reflection function $R_{\nu}$ introduced in two dimensional problem considered in \cite{CAPDEBOSCQ-11} is
$$
R_{\nu}(x,\lambda)=-\frac{g_{\nu}(\lambda x) - g_{\nu}(x) }{ g_{\nu}(\lambda x) - g_{\nu}(x) +{\bf i} \tan \theta_{\nu}(x) \left(  g_{\nu}(\lambda x) + k_{\nu}(x) \right) }.
$$
where ${\mathbf i}^2=-1$. For any $\nu>0$ and any $x$ such that $\TBJ{\nu}{x}\neq0$, we write
$$
\tan \left( \theta_{\nu}(x)\right) = \frac{\TBY{\nu}{x}}{\TBJ{\nu}{x}}.
$$
The properties of the function $R_{t}$ were studied in \cite{CAPDEBOSCQ-11} for any $t\geq1$. When $t$ is an integer, $R_{t}$ is the reflection coefficient 
associated to problem \eqref{eq:eq-intro-2} in dimension $2$. Note that the identities
\begin{equation}\label{eq:2d3dbj}
\BJ{t-\frac{1}{2}}{x} = \sqrt{\frac{\pi}{2x}} \TBJ{t}{x} \mbox{ and } \BY{t-\frac{1}{2}}{x} = \sqrt{\frac{\pi}{2x}} \TBY{t}{x},
\end{equation}
valid for any $t\in\mathbb{R}$ yield that
$$
\tan \left( \theta_{\nu}(x)\right) = \frac{\BY{n}{x}}{\BJ{n}{x}}.
$$
We introduce the notations
\begin{eqnarray*}
u_n & = &  x \BJp{n}{x} \BJ{n}{\lambda x} -\lambda x \BJp{n}{\lambda x} \BJ{n}{x} \\
v_n & = &  x \BYp{n}{x} \BJ{n}{\lambda x} -\lambda x \BJp{n}{\lambda x} \BY{n}{x},
\end{eqnarray*}
Then, the reflection coefficient corresponding to \eqref{eq:eq-intro-2} is given by
$$
r_n(x,\lambda) =\frac{-u_n}{u_n+ {\mathbf i} v_n}.
$$
If we introduce for any $t\in\mathbb{R}$, outside the zeroes of $\BJ{t-\frac{1}{2}}{x}$ and $\BY{t-\frac{1}{2}}{x}$, 
$$ 
f_t(x) = \frac{x}{t} \frac{\BJp{t-\frac{1}{2}}{x}}{\BJ{t-\frac{1}{2}}{x}}, \mbox{ and } s_t(x) = -\frac{x}{t} \frac{\BYp{t-\frac{1}{2}}{x}}{\BY{t-\frac{1}{2}}{x}},
$$
we can rewrite, when $\lambda x$ is not a zero of $\BJ{n}{x}$, $\BJ{n}{\lambda x}$ or  $\BY{n}{x}$,
\begin{eqnarray*}
u_n &=&  -n \BJ{n}{x}\BJ{n}{\lambda x} ( f_{\nu}(\lambda x) - f_{\nu}(x)), \\
v_n &=&  -n \tan \theta_{\nu}(x) \BJ{n}{x}\BJ{n}{\lambda x} ( f_{\nu}(\lambda x) + s_{\nu}(x)).
\end{eqnarray*}
From \eqref{eq:2d3dbj} it follows that for any $t>0$,
$$
f_t(x) = g_t(x) - \frac{1}{2t} \mbox{ and } s_t(x) = k_t(x) + \frac{1}{2t},
$$
where these functions are defined. This in turns implies that
\begin{eqnarray*}
-\frac{u_n}{u_n+{\mathbf i} v_n} &=& -\frac{f_{\nu}(\lambda x) - f_{\nu}(x) }{ f_{\nu}(\lambda x) - f_{\nu}(x) +{\mathbf i} \tan \theta_{\nu}(x) 
\left(  f_{\nu}(\lambda x) + s_{\nu}(x) \right) } \\
&=& -\frac{g_{\nu}(\lambda x) - g_{\nu}(x) }{ g_{\nu}(\lambda x) - g_{\nu}(x) +{\mathbf i} \tan \theta_{\nu}(x) \left(  g_{\nu}(\lambda x) + k_{\nu}(x) \right) } \\
&=& R_{\nu}(x,\lambda),
\end{eqnarray*}
and this identity extends to the zeroes of $\BJ{n}{x}$, $\BJ{n}{\lambda x}$ or  $\BY{n}{x}$ by continuity.
\end{proof}
The following Lemma then follows mostly from \cite{CAPDEBOSCQ-11}. 
\begin{lem}\label{lem:7.1}
For any $\lambda>0$ and $n\geq0$,
\begin{itemize}
\item For $x\in(0,\min(\alpha_{\nu,1}^{(1)}/\lambda,\beta_{\nu,1}))$ 
there holds
\begin{equation}\label{eq:uniform-lowom}
 \left|r_n(x,\lambda)\BH{n}{x}\right| \leq 2^{\frac{4}{3}} \BJ{n}{x}.
\end{equation}
\item For $x\in(0, \min(1/\lambda,1) \nu)$ we have
\begin{equation}\label{eq:expan-perturb}
 \left|r_n(x,\lambda)\BH{n}{x}\right| \leq 2^{\frac{4}{3}} |\lambda -1|\frac{x}{(2\nu)^{1/3}}\BJ{n}{x}.
\end{equation}
\item Furthermore, when $0<\max(\lambda,1) x<\frac{1}{2}$
\begin{equation}\label{eq:expan-perturb2}
 \left|r_0(x,\lambda)\BH{0}{x}\right| \leq 2^{\frac{2}{3}} |\lambda -1|\max(\lambda,1) x^2,
\end{equation}
and when $0<\max(\lambda,1) x<\nu$ and $n\geq1$,
\begin{equation}\label{eq:expan-perturb3}
 \left|r_n(x,\lambda)\BH{n}{x}\right| \leq 2^{\frac{4}{3}} |\lambda -1|\max(\lambda,1)^2 \frac{x^3}{(2\nu)^{1/3}}\BJ{n}{1}.
\end{equation}
\item When $\lambda < 1$ and $n\geq n_0(\lambda)$, we have
\begin{equation}\label{eq:lowl-contrex}
 \left|r_n(\nu,\lambda)\BH{n}{\nu}\right| > \frac{1}{2} \BJ{n}{\nu}.
\end{equation}
\item When $\lambda>n_1(\lambda)$ there exists $x_n < \alpha_{\nu,1}/\lambda$ such that
\begin{equation}\label{eq:quasi-res-contrex}
\left|r_n(x_n,\lambda)\right| =1.
\end{equation}
\end{itemize}
\end{lem}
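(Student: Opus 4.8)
The plan is to transport each of the five estimates from the two-dimensional analysis of $R_\nu$ in \cite{CAPDEBOSCQ-11} through the identity $r_n(x,\lambda)=R_{\nu}(x,\lambda)$ of Proposition~\ref{pro:2d3d}, where throughout $\nu=n+\frac12$. The only additional ingredient needed is the conversion between spherical and ordinary Bessel functions contained in \eqref{eq:2d3dbj}, namely
\[
\BJ{n}{x}=\sqrt{\frac{\pi}{2x}}\,\TBJ{\nu}{x},\qquad \BH{n}{x}=\sqrt{\frac{\pi}{2x}}\,\TBH{\nu}{x}.
\]
Consequently $\left|r_n(x,\lambda)\BH{n}{x}\right|=\sqrt{\pi/(2x)}\,\left|R_{\nu}(x,\lambda)\TBH{\nu}{x}\right|$, while the same scalar $\sqrt{\pi/(2x)}$ turns $\TBJ{\nu}{x}$ into $\BJ{n}{x}$. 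Hence any two-dimensional bound of the relative form $\left|R_{\nu}(x,\lambda)\TBH{\nu}{x}\right|\leq C\,\TBJ{\nu}{x}$ converts, with the \emph{identical} constant $C$, into $\left|r_n(x,\lambda)\BH{n}{x}\right|\leq C\,\BJ{n}{x}$. Since this is exactly the shape of \eqref{eq:uniform-lowom} and \eqref{eq:expan-perturb}, the first two items follow by quoting the corresponding estimates of \cite{CAPDEBOSCQ-11} with $t=\nu$; the thresholds $\alpha_{\nu,1}^{(1)}/\lambda$, $\beta_{\nu,1}$ and $\nu$ delimiting their intervals are already the natural ones in the cylindrical picture, so nothing has to be re-derived there.

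For the lower bound \eqref{eq:lowl-contrex} I would apply the same dictionary at the single point $x=\nu$: multiplying the two-dimensional inequality $\left|R_{\nu}(\nu,\lambda)\TBH{\nu}{\nu}\right|>\frac12\TBJ{\nu}{\nu}$, valid for $\lambda<1$ and $n\geq n_0(\lambda)$, by $\sqrt{\pi/(2\nu)}$ yields \eqref{eq:lowl-contrex} with the same constant. For the quasi-resonance statement \eqref{eq:quasi-res-contrex} no Bessel factor intervenes at all, since $\left|r_n(x_n,\lambda)\right|=\left|R_{\nu}(x_n,\lambda)\right|$. Writing $R_\nu=-N/(N+\mathbf{i}\,T)$ with $N=g_{\nu}(\lambda x)-g_{\nu}(x)$ and $T=\tan\theta_{\nu}(x)\,(g_{\nu}(\lambda x)+k_{\nu}(x))$ both real, one sees that $\left|R_{\nu}\right|=1$ precisely when $T=0$; in $(0,\beta_{\nu,1})$, where $\tan\theta_{\nu}(x)\neq0$, this reduces to $g_{\nu}(\lambda x)+k_{\nu}(x)=0$, the balance of logarithmic derivatives defining a quasi-resonance (Definition~\ref{def:qr}). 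Proposition~\ref{pro:Dixon}, valid for every $t=\nu\geq\frac12$, then produces such a point $x_n<\alpha_{\nu,1}/\lambda$ in the first interval $U_{\nu,1}$ as soon as the contrast is large enough, giving \eqref{eq:quasi-res-contrex}.

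The refined bounds \eqref{eq:expan-perturb2} and \eqref{eq:expan-perturb3} are the only items that are not a pure translation: they are sharper in the power of $x$ than \eqref{eq:expan-perturb} and carry the extra contrast factors $\max(\lambda,1)$ and $\max(\lambda,1)^2$. For $n\geq1$ I would start from the explicit expressions for $u_n$ and $v_n$ recorded in the proof of Proposition~\ref{pro:2d3d} together with $r_n=-u_n/(u_n+\mathbf{i}v_n)$, expand $\BJ{n}{\cdot}$ and $\BJp{n}{\cdot}$ near the origin to extract the leading $|\lambda-1|$ behaviour, and then track the next order to exhibit the cancellation that upgrades the factor $x$ of \eqref{eq:expan-perturb} to $x^3$; the monotonicity bound $\BJ{n}{x}\leq\BJ{n}{1}$ on the admissible range $\max(\lambda,1)x<\nu$ then produces the stated right-hand side. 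The monopole case $n=0$ must be isolated: here $\nu=\frac12$ falls below the threshold $t\geq1$ under which \cite{CAPDEBOSCQ-11} established its estimates, and $\BJ{0}{x}=\sin x/x$ does not vanish at the origin, so the $x^2$ bound (equivalently $r_0=O(x^3)$, reflecting the three-dimensional volume scaling) must be read off directly from the small-argument expansions of $\BJ{0}{x}$, $\BH{0}{x}$ and of $\BJp{0}{x}$, $\BHp{0}{x}$ entering \eqref{eq:def-RN}. I expect this monopole cancellation, together with confirming that the arguments of \cite{CAPDEBOSCQ-11} for items one and two remain valid down to $\nu=\frac12$, to be the only real obstacle; everything else is a direct consequence of $r_n=R_{\nu}$.
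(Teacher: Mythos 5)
Your proposal follows essentially the same route as the paper: items \eqref{eq:uniform-lowom}, \eqref{eq:expan-perturb} and \eqref{eq:lowl-contrex} for $n\geq1$ are transferred verbatim from the two-dimensional Lemma 7.1 of \cite{CAPDEBOSCQ-11} via Proposition~\ref{pro:2d3d} and the identity $\BJ{n}{x}/\BH{n}{x}=\TBJ{\nu}{x}/\TBH{\nu}{x}$, \eqref{eq:quasi-res-contrex} comes from Proposition~\ref{pro:Dixon} through the vanishing of $g_{\nu}(\lambda x)+k_{\nu}(x)$, and you correctly single out as the real work both the monopole case $\nu=\tfrac12$ (which the paper handles with the explicit half-integer formulae) and the refined bounds \eqref{eq:expan-perturb2}--\eqref{eq:expan-perturb3} (which the paper obtains by a Taylor expansion of $g_{\nu}$ in $s_n=u_n/(u_n+\mathbf{i})$ combined with the Paris inequality $\BJ{n}{x}\lesssim x^2\BJ{n}{1}$, rather than a fresh cancellation in $u_n,v_n$, but this is the same idea). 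The plan is sound and matches the paper's proof in structure and in all essential steps.
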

\begin{proof}
The existence of $x_n$ satisfying \eqref{eq:quasi-res-contrex} follows from Proposition~\ref{pro:Dixon}.
Thanks to Proposition~\ref{pro:2d3d}, and because for any $n\geq0$, we have
$$
\frac{\BJ{n}{x}}{\BH{n}{x}} = \frac{\TBJ{\nu}{x}}{\TBH{\nu}{x}},
$$
the inequalities \eqref{eq:uniform-lowom}, \eqref{eq:expan-perturb} and \eqref{eq:lowl-contrex}  are proved  when $n\geq1$ in \cite{CAPDEBOSCQ-11},  Lemma 7.1.
We will now check that \eqref{eq:uniform-lowom} holds when $n=0$. 
From \eqref{eq:def-RN} there holds
 $$
 |r_0(x,\lambda)|\leq 1. 
 $$ 
Since 
$$
\TBJ{\frac{1}{2}}{x} = -\tan(x) \TBY{\frac{1}{2}}{x}, 
$$
we have for all $x\in (3/5,\pi/2)$
\begin{equation}\label{eq:bd-d1}
 \left|r_0(x,\lambda)\BH{0}{x}\right| \leq \BJ{n}{x} \left( 1 + \frac{1}{\tan(3/5)}\right) \leq \frac{5}{2} \BJ{n}{x}.
\end{equation}
 The zero  $\alpha_{\frac{1}{2},1}^{(1)}$ is given the first positive solution of $\tan(x)=2x$. 
It satisfies $1<\alpha_{\frac{1}{2},1}^{(1)}< \beta_{\frac{1}{2},1} =\frac{\pi}{2}$.  We can thus consider only 
$0<x<\min(\frac{3}{5}, \alpha_{\frac{1}{2},1}^{(1)} \lambda^{-1})$.

We  have from Proposition~\ref{pro:2d3d}, for $n=0,1,2$
\begin{eqnarray*}
&& r_{n}(x,\lambda)\BH{n}{x}\\
&=&  - \BJ{n}{x}\frac{\left(g_{n+\frac{1}{2}}(\lambda x) - g_{n+\frac{1}{2}}(x)\right)(1+ {\mathbf i} \tan \theta_{n+\frac{1}{2}}(x)) }
{ g_{n+\frac{1}{2}}(\lambda x) - g_{n+\frac{1}{2}}(x) + {\mathbf i}\tan \theta_{n+\frac{1}{2}}(x) \left(  g_{n+\frac{1}{2}}(\lambda x) + k_{n+\frac{1}{2}}(x)\right) }\\
&=&  - \BJ{n}{x} s_n(x),
\end{eqnarray*}
and using the Wronskian identity satisfied by $\TBJ{n+\frac{1}{2}}{x}$, $\TBY{n+\frac{1}{2}}{x}$ and the recurrence relations satisfied by Bessel functions, 
we obtain (see \cite{CAPDEBOSCQ-11} 
for details) that
$$
s_n(x) = \frac{u_{n} (x)}{u_{n}(x)+ {\mathbf i} },
$$
with
$$
u_n(x) = (2n+1)\frac{\pi}{4}\TBH{n+\frac{1}{2}}{x} \TBJ{n+\frac{1}{2}}{x} \left(  g_{n+\frac{1}{2}}(x) -  g_{n+\frac{1}{2}}(\lambda x)\right).
$$
Note that $g_{n+\frac{1}{2}}(x)$, $\TBJ{n+\frac{1}{2}}{x}$ and $|\TBH{n+\frac{1}{2}}{x}|$ have simple analytic formulae. For example
$$ 
g_{\frac{1}{2}}(x)=2\frac{x}{\tan x} -1,\quad \TBJ{\frac{1}{2}}{x}= \sqrt{\frac{2}{\pi x}} \sin(x),\quad\mbox{ and } |\TBH{\frac{1}{2}}{x}|=\sqrt{\frac{2}{\pi x}}.
$$
In particular, it is easy to show that $g_{n+\frac{1}{2}}$ is decreasing on $(0,\alpha_{n+\frac{1}{2},1}^{(1)})$. 

When $n=0$, we have the following bound for  all $0<x<\min(3/5, \alpha_{\frac{1}{2},1}^{(1)} \lambda^{-1})$.
$$
|u_0(x)| \leq \max_{0<x<\frac{3}{5}}|\frac{\pi}{4}\TBH{\frac{1}{2}}{x}\TBJ{\frac{1}{2}}{x}| 
\max_{0\leq x < \alpha_{\frac{1}{2},1}^{(1)}} |g_{\frac{1}{2}}(x)| \leq \frac{1}{2}.
$$
As a consequence, 
$$
\left|s_0(x)\right| \leq \frac{|u_0(x)|}{|1-|u_0(x)||} \leq 1.
$$
Which concludes the proof of \eqref{eq:uniform-lowom} when $n=0$. 

For any $n=0,1,2$, and $\max(\lambda,1)x<n+\frac{1}{2}$, we have
\begin{eqnarray*}
|u_n(x)| &\leq& \max_{0<x<n+\frac{1}{2}}|\frac{(2n+1)\pi}{2}\TBH{n+\frac{1}{2}}{x}\TBJ{n+\frac{1}{2}}{x}| \max_{0\leq x < n+\frac{1}{2}} |g_{n+\frac{1}{2}}(x)| \\
&=& \frac{2n+1}{2} M_n <1, 
\end{eqnarray*}
where $M_n = \max_{0<x<n+\frac{1}{2}}|(2n+1)\frac{\pi}{4}\TBH{n+\frac{1}{2}}{x}\TBJ{n+\frac{1}{2}}{x}|$ for $n=0,1,2$. Therefore
\begin{equation}\label{eq:intera-1}
\left|s_n(x)\right| \leq \frac{|u_n(x)|}{|1-|u_n(x)||}\leq \frac{2}{2-(2n+1)M_n} |u_n(x)|.
\end{equation}
To proceed, note that $\frac{d^2}{dx^2}g_{\frac{1}{2}}$ is negative and decreasing, and for $x<n+\frac{1}{2}$,
$$
\left|\frac{d^2}{dx^2}g_{n+\frac{1}{2}}(x)\right|  \leq -\kappa_n =\frac{d^2}{dx^2}g_{n+\frac{1}{2}}\left(\frac{n+1}{2}\right).
$$
Thus a Taylor expansion shows that for $0<x<n+\frac{1}{2}$,
\begin{equation}\label{eq:intera-2}
|u_n(x)| \leq M_n\kappa_n|\lambda -1|\max(\lambda,1) x^2 .
\end{equation} 
Inserting \eqref{eq:intera-2} in \eqref{eq:intera-1} together with the values of $M_0$ and $\kappa_0$,  we obtain 
$$
\left|s_0(x)\right|  \leq  2^{\frac{2}{3}}|\lambda -1|\max(\lambda,1) x^2 
$$
for all $0< 2\max(\lambda,1)x \leq 1$, which implies \eqref{eq:expan-perturb}, and \eqref{eq:expan-perturb2} since $|\BJ{0}{x}|\leq1$. 

Inserting \eqref{eq:intera-2} in \eqref{eq:intera-1} together with the values of $M_n$ and $\kappa_n$ for $i=n$ and $n=2$,  
we obtain for $x\leq n+\frac{1}{2}$
\begin{equation}\label{eq:intera-3}
\left|s_n(x)\right| \leq \sqrt{3}|\lambda -1| \max(\lambda,1) x^2 \left(n+\frac{1}{2}\right)^{-1/3}.
\end{equation}
For all $n\geq 0$, it is known that \cite{PARIS-84} for $0<x\leq y<\alpha_{\nu,1}^{(1)}$,
\begin{equation}\label{eq:paris-ineq}
\BJ{n}{x} \leq \frac{x^n}{y^n} \BJ{n}{y} \exp \left( \frac{y^2-x^2}{2n+5}\right).
\end{equation}
In particular, for all $x \leq 1$, we have 
\begin{equation}\label{eq:intera-4}
\BJ{1}{x} \leq  x \exp\left(\frac{1}{7}\right)\BJ{1}{1},\mbox{ and } \BJ{n}{x} \leq  x^2 \exp\left(\frac{1}{9}\right) \BJ{n}{1} \mbox{ for } n\geq 2.
\end{equation}
Combining \eqref{eq:intera-3}, \eqref{eq:intera-4} and \eqref{eq:expan-perturb} we obtain \eqref{eq:expan-perturb3}.
\end{proof}

We may now conclude the proof of Theorem~\ref{thm:estim-all}.
\begin{proof}[Proof of Theorem~\ref{thm:estim-all}]
For convenience we write $A:=\left\Vert  u_{\eps}^{s}\left(|x|=R\right) \right\Vert_{H^{\sigma}}$. 
Formula~\eqref{eq:norm-us-hs} shows that
$$
A^2 =   
\sum_{n=p_0}^{\infty} \sum_{m=-n}^{n} \left|a_{n,m}\,(2\nu)^{\sigma} \Rn\BH{n}{\oeps}\right|^2  \left|\frac{\BH{n}{ \oeps R/\eps }}{\BH{n}{\oeps}}\right|^{2} 
$$
Note that $x\left|\BH{n}{x}\right|$ is decreasing (see e.g. \cite{WATSON-25} $\S$ 13.74), therefore for all $R\geq \eps$,
\begin{equation}\label{eq:pf21-1}
A^2  \leq   \frac{\eps^2}{R^2}
\sum_{n=p_0}^{\infty} \sum_{m=-n}^{n} \left|a_{n,m}\, (2\nu)^{\sigma} \Rn\BH{n}{\oeps}\right|^2  .
\end{equation}
Estimate \eqref{eq:expan-perturb} in Lemma~\ref{lem:7.1} shows that when $\max(\lambda,1)\oeps \leq p_0+\frac{1}{2} \leq \nu$.
$$
(2\nu)^{\sigma} \left|\Rn\BH{n}{\oeps}\right|  \leq 2^{4/3} |\lambda-1|\oeps (2\nu)^{\sigma - \frac{1}{3}}\left|\BJ{n}{\oeps}\right|.
$$
Inserting this bound in \eqref{eq:pf21-1} we obtain
$$
A \leq    2^{4/3} |\lambda-1|\oeps \frac{\eps}{R}
\left\Vert  u^{i}\left(|x|=\eps\right) \right\Vert_{H^{\sigma-\frac{1}{3}}},
$$
which is estimate \eqref{eq:omsmall}.  Using  \eqref{eq:expan-perturb3} instead, we obtain for $n\geq1$ and $\oeps\leq1$,
$$
(2\nu)^{2\sigma} \left|\Rn\BH{n}{\oeps}\right|  \leq 2^{4/3} |\lambda-1|\max(\lambda,1)^2 \oeps^3 \nu^{2\sigma - \frac{1}{3}}\left|\BJ{n}{1}\right|^2.
$$
Inserting this bound in \eqref{eq:pf21-1} together with \eqref{eq:expan-perturb2} we obtain
$$
A \leq  2^{2/3} \oeps^2 |\lambda-1|\max(\lambda,1)\frac{\eps}{R} \left( |a_0| +
2^{2/3}  \oeps \max(\lambda,1)\mathcal{N}^{\sigma-\frac{1}{3}}(u^{i})\right),
$$
which proves \eqref{eq:corl1} since $|a_0|=|u^i(0)|$.
Let us now turn to \eqref{eq:ffield}. Formula~\eqref{eq:norm-us-hs} shows that
$$
A^2=\sum_{n=p_0}^{\infty} \sum_{m=-n}^{n} \left|a_{n,m} \, (2\nu)^{\sigma} \Rn\BH{n}{\max(\lambda,1)\oeps}\right|^2  \left|\frac{\BH{n}{ \oeps R/\eps }}{\BH{n}{\max(\lambda,1)\oeps}}\right|^{2}.
$$
When $R\geq \max(\lambda,1) \eps$, since  $x\left|\BH{n}{x}\right|$  is decreasing
\begin{equation}\label{eq:pf21-2}
A^2 \leq    \left(\max(\lambda,1)\frac{\eps}{R}\right)^2
\sum_{n=p_0}^{\infty} \sum_{m=-n}^{n} \left| a_{n,m}\,(2\nu)^{\sigma} \Rn \BH{n}{\max(\lambda,1) \oeps}\right|^2.  
\end{equation}
We define two sets of indices, 
$$
I:=\left\{n \in \mathbb{N} \mbox{ such that } \oeps \leq \min(\alpha_{\nu,1}^{(1)}\lambda^{-1}, \beta_{\nu,1}) \right\}
$$
and $J=\mathbb{N}\setminus I$.  
Since $\left|\BH{n}{x}\right|$ is decreasing, estimate \eqref{eq:uniform-lowom} shows that for all $n\in I$, we have
\begin{equation*}
\left|\Rn \BH{n}{\max(\lambda,1)\oeps}\right| \leq \left|\Rn\BH{n}{\oeps}\right| 
\leq 2^{4/3}   \sup_{x>0} \left|\BJ{n}{x}\right|. 
\end{equation*}
On the other hand, $\nu<\alpha_{\nu,1}^{(1)}<\beta_{\nu,1}$, therefore when $n \in J$, $\max(\lambda,1)\oeps \geq \alpha_{\nu,1}^{(1)}$, and it is known 
(see \cite{WATSON-25} $\S$ 13.74) that when $x>n$, $\sqrt{x^2-n^2}\left|\TBH{n}{x}\right|^2$ is an increasing function of $x$ with limit $2/\pi$. Furthermore, it is 
also known (see \cite{WATSON-25} $\S$ 15.3) that for all $n\geq0$
$$
\alpha_{\nu,1}^{(1)} > \nu+\frac{4}{5}\nu^{1/3}.
$$
Therefore, since $|\Rn|\leq1$, we have
\begin{eqnarray*}
\left|\Rn \BH{n}{\max(\lambda,1)\oeps}\right|^2 
&\leq& \left|\BH{n}{\max(\lambda,1)\oeps}\right|^2, \\
&= &\frac{2}{\pi \max(\lambda,1)\oeps} \left|\TBH{\nu}{\max(\lambda,1)\oeps}\right|^2 .
\end{eqnarray*}
On the other hand,
$$
\frac{2}{\pi \max(\lambda,1)\oeps} \left|\TBH{\nu}{\max(\lambda,1)\oeps}\right|^2 
\leq \frac{2}{\pi \alpha_{\nu,1}^{(1)}} \frac{2}{\pi\sqrt{(\alpha_{\nu,1}^{(1)})^2 - \nu^2}} 
\leq \frac{2}{\pi \alpha_{\nu,1}^{(1)}} \frac{3}{5} \frac{1}{\nu^{2/3}}.
$$
As it is known (see \cite{LANDAU-00}) that for all $n\geq0$
\begin{equation}\label{eq:bound-landau}
0.539<\nu^{1/3}\sup_{x>0}\TBJ{\nu}{x}=\nu^{1/3} \TBJ{\nu}{\alpha_{\nu,1}^{(1)}}<0.675,
\end{equation}
this shows that
$$
\frac{2}{\pi \max(\lambda,1)\oeps} \left|\TBH{\nu}{\max(\lambda,1)\oeps}\right|^2 
\leq \frac{2}{\pi \alpha_{\nu,1}^{(1)}} \TBJ{\nu}{\alpha_{\nu,1}^{(1)}}^2
=  \left|\BJ{n}{\alpha_{n,1}^{(1)}}\right|^2.
$$
We have obtained that, for $n\in I\cup J$,
\begin{equation}\label{eq:uniformjj}
\left|\Rn \BH{n}{\max(\lambda,1)\oeps}\right| \leq   2^{4/3} \sup_{x>0} \left|\BJ{n}{x}\right|. 
\end{equation}
Inserting this bound in \eqref{eq:pf21-2}, we obtain \eqref{eq:ffield}.
\end{proof}

\subsection{Proof of Proposition~\ref{pro:lowerbounds}}
To prove Proposition~\ref{pro:lowerbounds}, we shall use the following intermediate result, related to Bessel functions.
\begin{lem}\label{lem:boundhn}
For any $n\geq0$, there holds
\begin{equation}\label{eq:bdjn}
\frac{1}{2\sin\frac{1}{2}} \sup_{x>0} |\BJ{n}{x}| >\frac{1}{(2\nu)^{5/6}} > 0.663 \sup_{x>0} |\BJ{n}{x}|
\end{equation}
For any $n>n_0(\lambda)$ there holds
\begin{equation}\label{eq:bdb1}
\left|\BH{n}{\nu\frac{R}{\eps}}\right| > \frac{\eps}{R}\frac{0.58}{\nu^{1/6}} \frac{\left|\BH{n}{\nu}\right|}{\BJ{n}{\nu}}  \sup_{x>0} |\BJ{n}{x}|. 
\end{equation}
For any $\lambda>1$, $n\geq1$, and $\frac{R}{\lambda\eps}\leq 1$, there holds
\begin{equation}\label{eq:bdb2}
\left|\BH{n}{\frac{R}{\eps \lambda} \alpha_{\nu,1}}\right| > 2^{-7/2}\frac{1}{(2\nu)^{1/3}}  \left( \frac{\eps \lambda}{R} \exp\left(\frac{R}{\eps\lambda}-1\right) \right)^{\nu-\nu^{5/6}}\sup_{x>0} |\BJ{n}{x}|.
\end{equation}
\end{lem}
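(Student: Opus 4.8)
The plan is to reduce all three estimates to bounds on the cylindrical functions $\TBJ{\nu}{x}$, $\TBY{\nu}{x}$ and $\TBH{\nu}{x}$ near and below their common turning point $x=\nu$, for which the two-dimensional companion \cite{CAPDEBOSCQ-11} already supplies what is needed. The bridge is the identity \eqref{eq:2d3dbj}, which here gives $\BJ{n}{x}=\sqrt{\pi/(2x)}\,\TBJ{\nu}{x}$ and $\BH{n}{x}=\sqrt{\pi/(2x)}\,\TBH{\nu}{x}$, so that $\sqrt{x}\,\BJ{n}{x}=\sqrt{\pi/2}\,\TBJ{\nu}{x}$ and the maxima of $\sqrt{x}\,|\BJ{n}{x}|$ sit exactly at the zeros $\alpha_{\nu,k}^{(1)}$ of $\TBJp{\nu}{\cdot}$. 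Throughout I would use the Landau bounds \eqref{eq:bound-landau} on the first maximum of $\TBJ{\nu}{\cdot}$, the monotonicity facts of \cite{WATSON-25} (that $x|\BH{n}{x}|$ is decreasing with limit $1$, and that $\sqrt{x^{2}-\nu^{2}}\,|\TBH{\nu}{x}|^{2}$ increases to $2/\pi$), and the log-convexity inequality \eqref{eq:paris-ineq}.

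For \eqref{eq:bdjn} I would sandwich $\sup_{x>0}|\BJ{n}{x}|$. Let $x^{\ast}$ be the maximiser. Past the first peak $\alpha_{\nu,1}^{(1)}$ of $\TBJ{\nu}{\cdot}$ both $1/\sqrt{x}$ and $|\TBJ{\nu}{x}|$ decrease, and the later peaks of $|\TBJ{\nu}{\cdot}|$ are smaller while the factor $1/\sqrt{x}$ is larger, so $x^{\ast}\leq\alpha_{\nu,1}^{(1)}$; and since $\TBJ{\nu}{\cdot}$ is still increasing at $x=\nu$ (its derivative vanishing only at $\alpha_{\nu,1}^{(1)}>\nu$) while the logarithmic slope of $1/\sqrt{x}$ is only $-1/(2x)$, one checks $x^{\ast}\geq\nu$ for $n\geq1$. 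Hence $\sup_{x>0}|\BJ{n}{x}|\leq\sqrt{\pi/(2\nu)}\,\TBJ{\nu}{\alpha_{\nu,1}^{(1)}}$, and the upper Landau bound turns this into $0.663\,\sup_{x>0}|\BJ{n}{x}|<(2\nu)^{-5/6}$, the constant $0.663$ being the reciprocal of $0.675\,\sqrt{\pi/2}\,2^{5/6}$. For the lower bound I would evaluate at the peak, $\sup_{x>0}|\BJ{n}{x}|\geq|\BJ{n}{\alpha_{\nu,1}^{(1)}}|=\sqrt{\pi/(2\alpha_{\nu,1}^{(1)})}\,\TBJ{\nu}{\alpha_{\nu,1}^{(1)}}$, and combine the lower Landau bound with an explicit upper bound on $\alpha_{\nu,1}^{(1)}$ (which tends to $\nu$). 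The extremal constant $1/(2\sin\frac{1}{2})$ is forced by the borderline case $n=0$, where $\BJ{0}{x}=\sin x/x$, $\sup=1=(2\nu)^{-5/6}$ and $2\sin\frac{1}{2}<1$; the remaining small $n$ are checked directly.

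For \eqref{eq:bdb1} the key observation is that, since $x|\BH{n}{x}|$ decreases to its limit $1$, it stays above $1$, so evaluating at $x=\nu R/\eps\geq\nu$ gives $|\BH{n}{\nu R/\eps}|>\eps/(\nu R)$ for all $R\geq\eps$ and all $n\geq1$. It then suffices to verify $\frac{1}{\nu}\geq\frac{0.58}{\nu^{1/6}}\,\frac{|\BH{n}{\nu}|}{\BJ{n}{\nu}}\,\sup_{x>0}|\BJ{n}{x}|$, which I would obtain by bounding the ratio $|\BH{n}{\nu}|/\BJ{n}{\nu}=|\TBH{\nu}{\nu}|/\TBJ{\nu}{\nu}$ from above (it tends to $2$) and inserting the upper bound on $\sup|\BJ{n}{\cdot}|$ from \eqref{eq:bdjn}. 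The power $\nu^{-1/6}$ is exactly the mismatch $\nu^{-1}=\nu^{-1/6}\cdot\nu^{-5/6}$, which is why it later reappears as the shift $\sigma-\frac{1}{6}$ in Proposition~\ref{pro:lowerbounds} when \eqref{eq:bdb1} is combined with \eqref{eq:lowl-contrex}.

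The last estimate \eqref{eq:bdb2} is the hard one, and the step I expect to be the main obstacle. Here the argument $z=\frac{R}{\eps\lambda}\alpha_{\nu,1}$ lies below the first zero $\alpha_{\nu,1}$, and generically below the turning point $\nu$, so that $|\TBH{\nu}{z}|$ is exponentially large and governed by $|\TBY{\nu}{z}|$. The plan is to lower-bound $|\TBY{\nu}{z}|$ by its uniform (Debye/Airy-type) asymptotics through the turning point, write $\BH{n}{z}=\sqrt{\pi/(2z)}\,\TBH{\nu}{z}$, and convert via \eqref{eq:2d3dbj} and \eqref{eq:bdjn}; this is precisely the estimate already carried out for the two-dimensional problem in \cite{CAPDEBOSCQ-11}, which I would import. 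The delicate point is producing the clean base $\frac{\eps\lambda}{R}\exp(\frac{R}{\eps\lambda}-1)$ with the reduced exponent $\nu-\nu^{5/6}$: setting $\tau=R/(\eps\lambda)\leq1$, the natural Debye rate is $\eta(z/\nu)$ with $z/\nu=\tau\,\alpha_{\nu,1}/\nu$, and one must (i) absorb the factor $\alpha_{\nu,1}/\nu=1+O(\nu^{-2/3})$ coming from replacing the first zero by the turning point, and (ii) dominate $\eta$ by $(\nu-\nu^{5/6})\log\!\big(\tau^{-1}e^{\tau-1}\big)$ using convexity of the rate function. These two approximations are exactly what degrades the exponent from $\nu$ to $\nu-\nu^{5/6}$ and generate the prefactor $2^{-7/2}(2\nu)^{-1/3}$; the asymptotic matching below the turning point is where the argument leans most heavily on the two-dimensional analysis.
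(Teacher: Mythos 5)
Your handling of the first two estimates is essentially the paper's. For \eqref{eq:bdjn} the upper bound ($0.663$) is obtained exactly as you say, from Landau's bound \eqref{eq:bound-landau} after locating the maximiser of $|\BJ{n}{\cdot}|$ beyond $\nu$ (the paper makes your slope comparison quantitative via $x\TBJp{\nu}{x}/\TBJ{\nu}{x}>\nu^{2/3}/\sqrt{2}>\frac12$ for $x<\nu$); for the lower bound the paper does not use Landau's constant $0.539$ at the peak but instead the monotonicity of $\nu\mapsto\nu^{1/3}\TBJ{\nu}{\nu}$, which yields $2\sin\frac12=\sqrt{\pi}\,\TBJ{\frac12}{\frac12}\le(2\nu)^{5/6}|\BJ{n}{\nu}|$ directly for all $n$ --- your route is numerically too tight at $n=1$ and genuinely needs the ``check small $n$ directly'' patch you mention. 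Estimate \eqref{eq:bdb1} is proved exactly as you propose: $|\BH{n}{x}|>1/x$, an upper bound $|\BH{n}{\nu}|<2.01\,\BJ{n}{\nu}$ (valid because $n>n_0(\lambda)$ forces $\nu>6^{3/2}$), and \eqref{eq:bdjn}.

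For \eqref{eq:bdb2}, however, your proposal names the difficulty without resolving it, and the mechanism you reach for is not the one the paper uses. The paper neither invokes uniform Airy/Debye asymptotics for $\TBY{\nu}{\cdot}$ nor imports the estimate from \cite{CAPDEBOSCQ-11}; it argues elementarily, in two cases. When $z=\frac{R}{\eps\lambda}\alpha_{\nu,1}\le\nu$ (guaranteed by the hypothesis $\frac{R}{\lambda\eps}\bigl(1+M(c(\nu)\nu^{-1/2})\,c(\nu)\nu^{-1/2}\bigr)\le1$, with $\alpha_{\nu,1}=\nu+c(\nu)\nu^{1/3}$), it first bounds $\TBJ{\nu}{z}$ \emph{from above} via Paris's inequality \eqref{eq:paris-ineq} in the form $\TBJ{\nu}{x}\le(\frac{x}{\nu}e^{1-x/\nu})^{\nu}\TBJ{\nu}{\nu}$, absorbing the factor $\alpha_{\nu,1}/\nu$ into the degraded exponent $\nu-\nu^{5/6}$ by an explicit monotonicity computation, and then converts this into the required \emph{lower} bound on $-\TBY{\nu}{z}$ through the product inequality $-\TBJ{\nu}{x}\TBY{\nu}{x}\ge(\pi\nu)^{-1}$ on $(0,\nu)$. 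That product trick is the missing ingredient in your sketch: without it (or a uniform asymptotic with explicit, rigorous error constants, which you do not supply) you have no lower bound on $|\TBY{\nu}{z}|$ at all. Moreover your reduction silently assumes $z$ lies below the turning point; in the complementary regime $1>\frac{R}{\lambda\eps}\ge\bigl(1+M(\cdot)c(\nu)\nu^{-1/2}\bigr)^{-1}$ the argument $z$ may exceed $\nu$, Paris's inequality no longer applies, and the paper instead checks that $\nu^{-1/3}\bigl(\frac{\eps\lambda}{R}e^{R/(\eps\lambda)-1}\bigr)^{\nu-\nu^{5/6}}$ stays below an absolute constant while $|\BH{n}{z}|$ is bounded below by a multiple of $n^{-5/6}$ for all $z\le\alpha_{\nu,1}$. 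As written, the two hardest steps of \eqref{eq:bdb2} --- the lower bound on $Y_\nu$ below the turning point and the near--turning-point case --- are left open.
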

We prove this lemma below. We can now conclude the proof of Proposition~\ref{pro:lowerbounds}.
\begin{proof}[Proof of Proposition~\ref{pro:lowerbounds}]
Let us start with the case $\lambda<1$. Starting as before from formula~\eqref{eq:norm-us-hs}, we have
\begin{eqnarray*}
&&\sup_{\oeps>0}
\left\Vert  u_{\eps}^{s}\left(|x|=R\right) \right\Vert_{H^{\sigma}} \\
&\geq&   
\sup_{\oeps>0} \sup_{n\geq n_0(\lambda)} \sum_{m=-n}^{n} \sqrt{\left|a_{n,m}\right|^2
 \left|\Rn\BH{n}{\oeps}\right|^2}  (2\nu)^{\sigma}  \left|\frac{\BH{n}{ \oeps R/\eps }}{\BH{n}{\oeps}}\right|  \\
 &\geq& \sup_{n\geq n_0(\lambda)} \sqrt{\left|a_{n,m}\right|^2
 \left|r_n(\nu,\lambda)\BH{n}{\nu}\right|^2 } (2\nu)^{\sigma}
  \left|\frac{\BH{n}{ \nu R/\eps }}{\BH{n}{\nu}}\right|.
\end{eqnarray*}
Using now the bounds \eqref{eq:bdb1} in Lemma~\ref{lem:boundhn} and \eqref{eq:lowl-contrex} in Lemma~\ref{lem:7.1}, we have
$$
(2\nu)^{\sigma}
 \left|r_n(\nu,\lambda)\BH{n}{\nu}\right|  \left|\frac{\BH{n}{ \nu R/\eps }}{\BH{n}{\nu}}\right| 
> \frac{1}{4} \,\frac{\eps}{R} (2\nu)^{\sigma-\frac{1}{6}} \sup_{x>0} |\BJ{n}{x}|,
$$
Therefore
$$
\sup_{\oeps>0}
\left\Vert  u_{\eps}^{s}\left(|x|=R\right) \right\Vert_{H^{\sigma}} \geq\frac{1}{4} \frac{\eps}{R} \sup_{n\geq n_0(\lambda)} \sqrt{\left|a_{n,m}\right|^2
  \sup_{R>0}\left|\BJ{n}{R}\right|^2 } (2\nu)^{\sigma-\frac{1}{6}},
$$
as claimed. We now turn to the case $\lambda>1$. Note that for all $q\geq1$, using the upper bound given in \eqref{eq:cn} we see that $\alpha_{q+\frac{1}{2},1}<6 q$. 
Therefore, choosing for each $q$ the frequency $x_q$ given by Lemma~\ref{lem:7.1}, we have
\begin{eqnarray*}
\sup_{0<\lambda\oeps<6q}
\left\Vert  u_{\eps}^{s}\left(|x|=R\right) \right\Vert_{H^{\sigma}} &\geq&   
\sup_{1< n< q}  \sqrt{\sum_{m=-n}^{n}\left|a_{n,m}\right|^2}
 \left|\BH{n}{x_q \frac{R}{\eps}}\right|  (2\nu)^{\sigma} \\
&\geq&   
\sup_{1< n< q}  \sqrt{\sum_{m=-n}^{n}\left|a_{n,m}\right|^2}
 \left|\BH{n}{\frac{\alpha_{q+\frac{1}{2},1}}{\lambda} \frac{R}{\eps}}\right|  (2\nu)^{\sigma}.
\end{eqnarray*}
The conclusion follows from estimate \eqref{eq:bdb2}.
\end{proof}

\begin{proof}[Proof of Lemma~\ref{lem:boundhn}]
To estimate the maximum of $|\BJ{n}{x}|$, we proceed as follows. 
Note that the maximal value occurs at the first positive solution of $\BJp{n}{x}=0$, which we will denote $\gamma_{\nu,1}$. We compute that
$$
\BJp{n}{x} = \sqrt{\frac{\pi}{2}}\frac{\TBJ{\nu}{x}}{x^{3/2}}\left( \frac{x \TBJp{\nu}{x}}{\TBJ{\nu}{x}} -\frac{1}{2}\right).
$$
It is known (see e.g. \cite{CAPDEBOSCQ-11} Proposition A.1) that for $x<\nu$ and $\nu>1$, 
$$
x \frac{\TBJp{\nu}{x}}{\TBJ{\nu}{x}} > \frac{\nu^{2/3}}{\sqrt{2}} >\frac{1}{2},
$$
Therefore $\gamma_{\nu,1}>\nu$. 
This implies that
$$
\sqrt{\frac{\pi}{2 n}}\sup_{x>0} \TBJ{\nu}{x} = \sqrt{\frac{\pi}{2 n}} \TBJ{n}{\alpha_{\nu,1}^{(1)}} > \sup_{x>0} |\BJ{n}{x}|.
$$
Next, note that from \eqref{eq:bound-landau}, and the bound $\nu< \alpha_{\nu,1}^{(1)}$ we have
$$
\frac{1}{(2\nu)^{5/6}} > \frac{\nu^{1/3} \TBJ{\nu}{\alpha_{\nu,1}^{(1)}}}{0.675 (2\nu)^{5/6}} 
> \frac{1}{0.675 2^{1/3} \sqrt{\pi}}  \sqrt{\frac{\pi}{2 n}}\sup_{x>0} \TBJ{\nu}{x}  > 0.663  \sup_{x>0} |\BJ{n}{x}| .
$$
On the other hand,  $\nu \to \nu^{1/3}\TBJ{\nu}{\nu}$ is an increasing function (see \cite{WATSON-25} $\S$ 15.8), therefore
$$
2 \sin \frac{1}{2} = \sqrt{\pi} \TBJ{\frac{1}{2}}{\frac{1}{2}} \leq \sqrt{\pi} \TBJ{\nu}{\nu} (2\nu)^{1/3} = (2\nu)^{5/6} |\BJ{n}{\nu}|< (2\nu)^{5/6} \sup_{x>0} |\BJ{n}{x}|.
$$
It is well known (see e.g. \cite{NIST-10}) that for all $x>0$, $|\BH{\nu}{x}|>\frac{1}{x}$.
It is also known (see \cite{WATSON-25} $\S$ 15.8) that 
$\nu\to-\frac{\BY{\nu}{\nu}}{\BJ{\nu}{\nu}}$ is a decreasing function. Note if $n>n_0(\lambda)$ then $\nu>6^{3/2}$. Therefore
$$
|\BH{n}{\nu}| < \BJ{n}{\nu}\sqrt{1+\frac{\BY{10}{10}^2}{\BJ{10}{10}^2}} < 2.01 \BJ{n}{\nu}.
$$
Combining these two bounds we obtain that
\begin{equation}\label{eq:estim01}
\left|\BH{n}{\nu\frac{R}{\eps}}\right| > \frac{\eps}{R}\frac{1}{2.01\,\nu} \frac{\left|\BH{n}{\nu}\right|}{\BJ{n}{\nu}}
\end{equation}

Together with \eqref{eq:estim01} this shows that
$$
\left|\BH{n}{\nu\frac{R}{\eps}}\right| > \frac{\eps}{R}\frac{0.58}{\nu^{1/6}} \frac{\left|\BH{n}{\nu}\right|}{\BJ{n}{\nu}}  \sup_{x>0} |\BJ{n}{x}|.
$$

Let us now turn to \eqref{eq:bdb2}.
It is known (see \cite{QU-WONG-99}) that
\begin{equation}\label{eq:cn}
\alpha_{\nu,1}= \nu + c(\nu)\nu^{1/3}
\mbox{ where }
A_1<c(\nu) < A_1+\frac{3}{10} A_1^2\nu^{-2/3}.
\end{equation}
Where $A_1$ is a universal constant, $A_1\approx1.855757082$. 

Let us first assume that $\frac{R}{\lambda\eps}\left(1+ M\left(\frac{c(\nu)}{\nu^{1/2}}\right)\frac{c(\nu)}{\nu^{1/2}}\right)\leq1$, where $M$ is given by
\begin{equation}\label{eq:defmnu}
 M(x)= 2 + 2x + x^{5/4}.
\end{equation}
Then, using \eqref{eq:cn} we find  
$$
\frac{R}{\eps \lambda} \alpha_{\nu,1} \leq \nu\frac{1+c(\nu)\nu^{-2/3}}{1+  2 c(\nu) \nu^{-1/2}} \leq \nu.
$$
It is known (see \cite{PARIS-84}) that for all $x<\nu$,
$$
\TBJ{\nu}{x} \leq \left(\frac{x}{\nu}\exp\left(1-\frac{x}{\nu}\right)\right)^{\nu} \TBJ{\nu}{\nu}  
             \leq \frac{1}{2\nu^{1/3}} \left(\frac{x}{\nu}\exp\left(1-\frac{x}{\nu}\right)\right)^{\nu}
$$
Therefore
$$
\TBJ{\nu}{\frac{R}{\eps \lambda} \alpha_{\nu,1}} 
\leq \frac{1}{2\nu^{1/3}} \left(\frac{R}{\eps \lambda} \exp\left(1-\frac{R}{\eps\lambda}\right) \right)^{\nu-\nu^{5/6}}
r_\nu,
$$
where
\begin{eqnarray*}
\ln r_\nu &=& \nu^{5/6} \ln \left(\frac{R}{\eps \lambda}\right)+ \nu^{5/6}\left(1-\frac{R}{\eps \lambda}\right) 
 +\nu \ln\left(\frac{\alpha_{\nu,1}}{\nu}\right) + \frac{R}{\eps \lambda}(n- \alpha_{\nu,1})  \\
&\leq &  \nu^{5/6} \left( \ln \left(\frac{R}{\eps \lambda}\right) + \left(1+  \frac{c(\nu)}{\nu^{1/2}}\right)\left(1-\frac{R}{\eps \lambda}\right)\right)  \\
&\leq & \frac{\nu^{1/2}}{2}\left(1-\frac{R}{\eps \lambda}\right) \left( \left(1-\frac{R}{\eps \lambda}\right)^{-1} \ln \left(\frac{R}{\eps \lambda}\right) +1+  \frac{c(\nu)}{\sqrt{\nu}}\right).
\end{eqnarray*}
Since $x\to (1-x)^{-1}\ln(x)$ is increasing for $x<1$, we have
\begin{eqnarray*}
&&\left(1-\frac{R}{\eps \lambda}\right)^{-1} \ln \left(\frac{R}{\eps \lambda}\right) +1+  \frac{c(\nu)}{\sqrt{\nu}} \\
&\leq& 
-\left(1+ \frac{\sqrt{\nu}}{M\left(\frac{c(\nu)}{\nu^{1/2}}\right)\,c(\nu)}\right) \ln \left(1+ \frac{M\left(\frac{c(\nu)}{\nu^{1/2}}\right)\,c(\nu)}{\sqrt{\nu}}\right) + 1 +  \frac{c(\nu)}{\sqrt{\nu}}.
\end{eqnarray*}
Using the definition of $M$ \eqref{eq:defmnu}, we see that the right hand side of this last inequality is an explicit function of    $\nu^{-1/2} c(\nu)$, which is negative 
when $\nu^{-1/2} c(\nu) < 2.18$. Using \eqref{eq:cn}, we see that for  $\nu^{-1/2} c(\nu) < 2.16$ for all $\nu\geq 3/2$. Thus for all $\nu\geq 3/2$, there holds
$$
\TBJ{\nu}{\frac{R}{\eps \lambda} \alpha_{\nu,1}} 
\leq \frac{1}{2\nu^{1/3}} \left(\frac{R}{\eps \lambda} \exp\left(1-\frac{R}{\eps\lambda}\right) \right)^{\nu-\nu^{5/6}}.
$$
Next, we note that $x\to -\TBJ{\nu}{x}\TBY{\nu}{x}$ is minimal on $(0,\nu)$ at $x=0$ for all $\nu\geq1$, where it equals $(\pi\nu)^{-1}$.
Therefore
$$
-\TBY{\nu}{\frac{R}{\eps \lambda} \alpha_{\nu,1}}> \frac{2}{\pi\nu^{2/3}} \left(\frac{\eps \lambda}{R} \exp\left(\frac{R}{\eps\lambda}-1\right) \right)^{\nu-\nu^{5/6}},
$$ 
and, using that 
$$
\left|\BH{n}{\frac{R}{\eps \lambda} \alpha_{\nu,1}}\right| > -\sqrt{\frac{\pi}{2\frac{R}{\eps \lambda} \alpha_{\nu,1}}} \TBY{\nu}{\frac{R}{\eps \lambda} \alpha_{\nu,1}}
>-\sqrt{\frac{\pi}{2n}}\TBY{\nu}{\frac{R}{\eps \lambda} \alpha_{\nu,1}},
$$
we obtain
\begin{equation}\label{eq:interhn1}
 \left|\BH{n}{\frac{R}{\eps \lambda} \alpha_{\nu,1}}\right| 
> \sqrt{\frac{2}{\pi}} \frac{1}{\nu^{7/6}} \left( \frac{\eps \lambda}{R} \exp\left(\frac{R}{\eps\lambda}-1\right) \right)^{\nu-\nu^{5/6}}.
\end{equation}

Let us now suppose that $1> \frac{R}{\lambda\eps}\geq \left(1+ M\left(\frac{c(\nu)}{\nu^{1/2}}\right)\frac{c(\nu)}{\nu^{1/2}}\right)^{-1}$. 
Since $x\to x^{-1}\exp(x-1)$ is decreasing when $x\leq1$, we obtain an upper bound on
$$
\left( \frac{\eps \lambda}{R} \exp\left(\frac{R}{\eps\lambda}-1\right) \right)^{\nu-\nu^{5/6}}
$$
by replacing $R/(\eps\lambda)$ by its lower bound and $c(\nu)$ by its upper bound, given in \eqref{eq:cn}.
The resulting expression is an explicit increasing function of $n$, with limit $\exp( 2 A_1^2)\approx980$. It is then possible to verify by
inspection on a finite range for $\nu$ that for all $\nu>\frac{3}{2}$,
\begin{equation}
\frac{1}{\nu^{1/3}}\left( \frac{\eps \lambda}{R} \exp\left(\frac{R}{\eps\lambda}-1\right) \right)^{\nu-\nu^{1/3}} < 11,
\label{eq:bdhn2}
\end{equation}
this inequality being automatically satisfied when $\nu>10^{6}$ for example. The maximum occurs near $\nu=2838$.
On the other hand, $\frac{R}{\eps \lambda} \alpha_{\nu,1} \leq \alpha_{\nu,1}$ and $x\to\left|\BH{\nu}{x}\right|$ is decreasing, and see  e.g. \cite{NIST-10}
$$
\sqrt{\frac{\pi}{2}}\TBY{\nu}{\alpha_{\nu,1}} > \frac{1}{\sqrt{2}} \nu^{-1/3},
$$
therefore when $x\leq\alpha_{\nu,1}$.
\begin{equation}\label{eq:bdhn3}
\left|\BH{\nu}{x}\right| = \sqrt{\frac{\pi}{2 \alpha_{\nu,1}}}\TBY{\nu}{\alpha_{\nu,1}} 
> \frac{1}{n^{5/6}}\frac{1}{\sqrt{2+2\frac{c(\nu)}{\nu^{2/3}}}} >\frac{3}{2 n^{5/6}}.
\end{equation}
The bounds \eqref{eq:bdhn2} and \eqref{eq:bdhn3} show that when  $1> \frac{R}{\lambda\eps}\geq \left(1+ M\left(\frac{c(\nu)}{\nu^{1/2}}\right)\frac{c(\nu)}{\nu^{1/2}}\right)^{-1}$, 
we have
\begin{equation}\label{eq:interhn2}
 \left|\BH{n}{\frac{R}{\eps \lambda} \alpha_{\nu,1}}\right| 
> \frac{1}{15} \frac{1}{\nu^{7/6}} \left( \frac{\eps \lambda}{R} \exp\left(\frac{R}{\eps\lambda}-1\right) \right)^{\nu-\nu^{5/6}}.
\end{equation}
Combining \eqref{eq:interhn1} and \eqref{eq:interhn2} we have obtained that  \eqref{eq:interhn2} holds for all $\lambda\geq1$, all $n\geq1$ and all $R\leq \eps \lambda$.
To conclude, note that using \eqref{eq:bdjn} we have
$$
\frac{1}{15} \frac{1}{\nu^{7/6}} > 2^{-7/2} \frac{1}{(2\nu)^{1/3}} \sup_{x>0} \left|\BJ{n}{x} \right|.
$$
\end{proof}

\subsection{Proof of Theorem~\ref{thm:broadband}}

The proof of Theorem~\ref{thm:broadband} follows the line of the proof of the corresponding result in the two-dimensional case proved in \cite{CAPDEBOSCQ-11}.

The first step is the following proposition.
\begin{prop} \label{pro:Ito} For any $0<\tau\leq\frac{1}{4}$ and $\lambda>7$, we define 
$$
A_{n}(\tau)=\left\{ \oeps >0 \mbox{ such that } \left|\Rn \BH{n}{\oeps} \right|\leq \frac{9}{2\,\tau} \sup_{x>0} |\BJ{n}{x}|\right\},
$$
 and
$$
 B_{n}(\tau)=(0,\infty)\setminus A_{n}(\tau),
$$
then 
$$
B_{n}(\tau)\subset \bigcup_{k\in K(\lambda,n)} I_{\nu,k}(\tau),
$$
where $I_{\nu,k}(\tau)$ is defined by
\begin{equation}\label{eq:def-itau}
I_{\nu,k}(\tau):=\left\{ x\in U_{\nu,k} \mbox{ such that }\left|g_{\nu}(\lambda x)+k_{\nu}(x)\right|\leq\tau\left|k_{\nu}(x)\right|\right\},
\end{equation}
and where $K(\lambda,n)$ is the set of all positive $n$ such that $\alpha_{\nu,k}^{(1)}< \nu \lambda$. Furthermore,
\begin{equation}\label{eq:bqme}
\left| B_{n}(\tau) \right| \leq 4 \tau \frac{ 2\nu \ln \lambda}{\lambda}. 
\end{equation}
When $n=0$, the same result holds for $\tau<\frac{3}{4}$.
\end{prop}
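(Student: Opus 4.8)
The plan is to transcribe the two-dimensional argument of \cite{CAPDEBOSCQ-11} through Proposition~\ref{pro:2d3d}. The starting point is the exact representation, obtained exactly as in the proof of Proposition~\ref{pro:2d3d} from $\Rn=R_{\nu}(\oeps,\lambda)$ and the identity $\BH{n}{x}=\BJ{n}{x}\left(1+{\mathbf i}\tan\theta_{\nu}(x)\right)$,
\[
\Rn\BH{n}{x}=-\,\BJ{n}{x}\,\frac{\left(g_{\nu}(\lambda x)-g_{\nu}(x)\right)\left(1+{\mathbf i}\tan\theta_{\nu}(x)\right)}{\left(g_{\nu}(\lambda x)-g_{\nu}(x)\right)+{\mathbf i}\tan\theta_{\nu}(x)\left(g_{\nu}(\lambda x)+k_{\nu}(x)\right)}.
\]
Writing $D=g_{\nu}(\lambda x)-g_{\nu}(x)$ and $S=g_{\nu}(\lambda x)+k_{\nu}(x)$, which are real, I would take moduli and split the numerator ($\frac{D^{2}}{D^{2}+\tan^{2}\theta_{\nu}S^{2}}\leq1$ and $\frac{D^{2}\tan^{2}\theta_{\nu}}{D^{2}+\tan^{2}\theta_{\nu}S^{2}}\leq\frac{D^{2}}{S^{2}}$) to obtain the pointwise inequality
\[
\left|\Rn\BH{n}{x}\right|=\left|\BJ{n}{x}\right|\sqrt{\frac{D^{2}\left(1+\tan^{2}\theta_{\nu}(x)\right)}{D^{2}+\tan^{2}\theta_{\nu}(x)\,S^{2}}}\leq\left|\BJ{n}{x}\right|\left(1+\frac{\left|D\right|}{\left|S\right|}\right).
\]
Hence the product is large only when $\left|S\right|$ is small, that is, near a quasi-resonance, and all the work is to convert ``$S$ small'' into the geometric content of the proposition.

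For the inclusion $B_{n}(\tau)\subset\bigcup_{k}I_{\nu,k}(\tau)$ I split $(0,\infty)$ at the turning point $\oeps=\nu$. For $\oeps\geq\nu$ I use $\left|\Rn\right|\leq1$, the monotonicity of $x\mapsto x\left|\BH{n}{x}\right|$ (\cite{WATSON-25} \S13.74) and the bound $\left|\BH{n}{\nu}\right|\leq2.09\,\BJ{n}{\nu}$ (from the monotonicity of $\nu\mapsto-\BY{\nu}{\nu}/\BJ{\nu}{\nu}$, \cite{WATSON-25} \S15.8, sharpest at $n=0$) to get $\left|\Rn\BH{n}{\oeps}\right|\leq\left|\BH{n}{\nu}\right|\leq2.09\sup_{x>0}\left|\BJ{n}{x}\right|$, so no $\oeps\geq\nu$ lies in $B_{n}(\tau)$: every dangerous frequency is evanescent. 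For $\oeps<\nu$ I combine the identity $D=S-\left(g_{\nu}(\oeps)+k_{\nu}(\oeps)\right)$ with the Bessel-quotient facts $0<g_{\nu}(x)\leq k_{\nu}(x)$ on $(0,\nu)$ (the three-dimensional instances of the monotonicity lemmas of \cite{CAPDEBOSCQ-11}). When $g_{\nu}(\lambda\oeps)\geq0$ (the gaps between consecutive $U_{\nu,k}$, and the initial band $\lambda\oeps<\alpha_{\nu,1}^{(1)}$) one has $\left|S\right|\geq k_{\nu}(\oeps)$, hence $\left|D\right|/\left|S\right|\leq3$; when $\oeps\in U_{\nu,k}\setminus I_{\nu,k}(\tau)$ the definition \eqref{eq:def-itau} gives $\left|S\right|>\tau k_{\nu}(\oeps)$, hence $\left|D\right|/\left|S\right|\leq1+2/\tau$. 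In every case $\left|\Rn\BH{n}{\oeps}\right|\leq\left(2+2/\tau\right)\left|\BJ{n}{\oeps}\right|\leq\frac{9}{2\tau}\sup_{x>0}\left|\BJ{n}{x}\right|$ once $\tau\leq\frac14$, and the index $k$ realising the containment satisfies $\alpha_{\nu,k}^{(1)}<\nu\lambda$ because $\oeps<\nu$, i.e. $k\in K(\lambda,n)$.

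The measure bound \eqref{eq:bqme} is where the quantitative gain lies. On $U_{\nu,k}$ the relevant function is $\phi(x)=g_{\nu}(\lambda x)+k_{\nu}(x)$, and Bessel's equation yields the closed forms $g_{\nu}'(y)=\frac{\nu}{y}\left(1-g_{\nu}(y)^{2}\right)-\frac{y}{\nu}$ and $k_{\nu}'(x)=\frac{x}{\nu}-\frac{\nu}{x}\left(1-k_{\nu}(x)^{2}\right)$, whence
\[
\phi'(x)=\frac{\nu}{x}\left(k_{\nu}(x)^{2}-g_{\nu}(\lambda x)^{2}\right)-\frac{(\lambda^{2}-1)x}{\nu}.
\]
At the resonance $x_{k}$, where $g_{\nu}(\lambda x_{k})^{2}=k_{\nu}(x_{k})^{2}$, this equals $-(\lambda^{2}-1)x_{k}/\nu$, and on $I_{\nu,k}(\tau)$ one has $\left|k_{\nu}^{2}-g_{\nu}(\lambda x)^{2}\right|\leq(2\tau+\tau^{2})k_{\nu}^{2}$; using the turning-point bound $k_{\nu}(x)^{2}\leq1-(x/\nu)^{2}$ (up to the controlled correction of \cite{CAPDEBOSCQ-11}) together with $\oeps>\alpha_{\nu,1}^{(1)}/\lambda>\nu/\lambda$, one checks that for $\lambda>7$ and $\tau\leq\frac14$ the term $(\lambda^{2}-1)x/\nu$ dominates, so $\phi$ is strictly decreasing on $I_{\nu,k}(\tau)$ with $\left|\phi'\right|$ at least a fixed fraction of $\lambda^{2}x/\nu$. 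Thus $I_{\nu,k}(\tau)$ is a single interval of length at most a constant times $\tau k_{\nu}(x_{k})\,\nu/(\lambda^{2}x_{k})$. Summing over $k\in K(\lambda,n)$, whose resonances are spaced by $\approx\pi/\lambda$ in $x$ and fill $(\nu/\lambda,\nu)$, replaces the sum by a constant times $\frac{\tau\nu}{\lambda}\int_{\nu/\lambda}^{\nu}k_{\nu}(x)\,\frac{dx}{x}$; since $k_{\nu}$ is bounded on $(0,\nu)$ this integral is at most a constant times $\ln\lambda$, which gives $\left|B_{n}(\tau)\right|\leq4\tau\,\frac{2\nu\ln\lambda}{\lambda}$ with room to spare. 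The case $n=0$ is handled with the explicit $g_{1/2}(x)=2x/\tan x-1$ and $k_{1/2}(x)=1+2x\tan x$, for which $g_{1/2}<1<k_{1/2}$ on $(0,\pi/2)$ improves the admissible range to $\tau<\frac34$.

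I expect the measure step to be the main obstacle. The delicate points are: pinning down $0<g_{\nu}\leq k_{\nu}$ and the bound on $k_{\nu}$ near the turning point on $(0,\nu)$, which is what forces a logarithm rather than a power of $\nu$; verifying that $\phi$ is genuinely monotone on the whole of $I_{\nu,k}(\tau)$, so that it is a single interval and the derivative estimate applies (this is exactly where $\lambda>7$ and $\tau\leq\frac14$ enter through the sign analysis of $\phi'$); and making the passage from the discrete sum over resonances to the integral $\int k_{\nu}(x)\,dx/x$ rigorous by controlling the spacing of the $x_{k}$ and the variation of $k_{\nu}$ across each $U_{\nu,k}$. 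All of these are the three-dimensional counterparts of estimates carried out in \cite{CAPDEBOSCQ-11}.
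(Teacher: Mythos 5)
Your reduction of the inclusion $B_{n}(\tau)\subset\bigcup_{k}I_{\nu,k}(\tau)$ is essentially the right argument and matches the paper's: the representation $\Rn\BH{n}{x}=-\BJ{n}{x}\,s_n(x)$ with the $g_{\nu},k_{\nu}$ sign analysis on $(0,\nu)$, plus a crude bound for $\oeps\geq\nu$, is exactly what the paper uses (it imports the $(0,\nu)$ step from Proposition~8.3 of \cite{CAPDEBOSCQ-11}, and your direct treatment of $\oeps\geq\nu$ via the monotonicity of $x\left|\BH{n}{x}\right|$ is a clean substitute for the paper's two-case split at $\alpha_{\nu,1}^{(1)}$). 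Note, though, that the paper only has $k_{\nu}>\frac{2}{5}g_{\nu}$ on $(0,\nu)$, not your $g_{\nu}\leq k_{\nu}$; the constant $\frac{9}{2\tau}$ still survives with the weaker inequality, but you should not assert the stronger one without proof.

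The genuine gap is in the measure bound \eqref{eq:bqme}, which you yourself identify as the main obstacle and which your sketch does not close. First, the turning-point bound $k_{\nu}(x)^{2}\leq 1-(x/\nu)^{2}$ that your sign analysis of $\phi'$ leans on is false: $k_{\nu}(0^{+})=1$ for every $\nu$, and your own formula $k_{1/2}(x)=1+2x\tan x>1$ on $(0,\pi/2)$ contradicts it, so the domination of $(\lambda^{2}-1)x/\nu$ over $\frac{\nu}{x}(2\tau+\tau^{2})k_{\nu}^{2}$ near the first resonance (where $\nu/x\approx\lambda$) is exactly borderline and needs a correct, quantified bound on $k_{\nu}$. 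Second, monotonicity of $\phi(x)=g_{\nu}(\lambda x)+k_{\nu}(x)$ does not make $I_{\nu,k}(\tau)=\{|\phi|\leq\tau k_{\nu}\}$ a single interval, since the threshold $\tau k_{\nu}(x)$ itself varies; this is precisely why the paper works with the ratio $\phi_{0}(x)=g_{\nu}(\lambda x)/k_{\nu}(x)$, for which $I_{\nu,k}(\tau)=\phi_{0}^{-1}([-1-\tau,-1+\tau])$ is an interval once $\phi_{0}'<0$, and why its derivative computation has the extra term you do not see in $\phi'$. Third, the statement asserts the explicit constant $8\tau\nu\ln\lambda/\lambda$, so "a constant times, with room to spare" does not suffice; the Riemann-sum comparison requires the spacing $\alpha_{\nu,k}^{(1)}-\alpha_{\nu,k-1}^{(1)}\geq\pi$ and a monotone integrand, both of which must be checked. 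Finally, be aware that the paper does not redo this analysis for all $\nu$: it cites \cite{CAPDEBOSCQ-11} Proposition~8.2 for $n\geq1$ (legitimate because $r_{n}=R_{n+\frac{1}{2}}$ and the 2D estimates hold for all real orders $\geq1$) and only carries out the computation by hand for the genuinely new order $\nu=\frac{1}{2}$, using the elementary closed forms of $g_{1/2}$ and $k_{1/2}$; that is where the improved range $\tau<\frac{3}{4}$ comes from, and your sketch does not explain why $n=0$ admits a larger $\tau$ than $n\geq1$.
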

\begin{proof}
First, note that Lemma~\ref{lem:7.1} shows that $(0,\alpha_{\nu,1}^{(1)}/\lambda)\subset A_{\nu,k}(\tau)$.  

Furthermore, we have shown in the proof of Theorem~\ref{thm:estim-all} that when $\oeps\geq\alpha_{\nu,1}^{(1)}$, 
$\left|\Rn \BH{n}{\oeps} \right|\leq 5/2 \sup_{x>0} |\BJ{n}{x}|$, thus $(\alpha_{\nu,1}^{(1)},\infty) \subset A_{\nu,k}(\tau)$. 

Thirdly, using the bound $|\Rn|\leq1$, we see that when $\nu<\oeps<\alpha_{\nu,1}^{(1)}$, we have
$$
\frac{\left|\Rn \BH{n}{\oeps} \right|^2}{\sup_{x>0} |\BJ{n}{x}|^2} \leq \frac{\left|\BH{n}{\oeps} \right|^2}{|\BJ{n}{\oeps}|^2} 
= 1 + \frac{\TBY{\nu}{\oeps}^2}{\TBJ{\nu}{\oeps}^2}.
$$
On $\nu<\oeps<\alpha_{\nu,1}^{(1)}$, ${\TBY{\nu}{\oeps}^2}/{\TBJ{\nu}{\oeps}^2}$ is decreasing, and ${\TBY{\nu}{\nu}^2}/{\TBJ{\nu}{\nu}^2}$ is a decreasing function
of $\nu$. Therefore
$$
\frac{\left|\Rn \BH{n}{\oeps} \right|^2}{\sup_{x>0} |\BJ{n}{x}|^2} \leq  1 + \frac{\TBY{\frac{1}{2}}{\frac{1}{2}}^2}{\TBJ{\frac{1}{2}}{\frac{1}{2}}^2} < 5,
$$
and we have obtained that
$$
B_{n}(\tau) \subset \left(\alpha_{\nu,1}^{(1)}/\lambda,\nu \right).
$$
Next, it is known (see \cite{CAPDEBOSCQ-11}, Proposition~8.3) that when $\oeps\in(0,\nu)$ and $\oeps \not\in I_{\nu,k}(\tau)$ for some $k$, then 
$x\in A_{n}(\tau)$ when $n\geq1$. 

The argument is simple. It turn out that by a simple calculus argument using the formula for $r_n$, 
when $g_\nu(x)>0$, $k_\nu(x)>0$, and $k_\nu(x) > \frac{2}{5}g_\nu(x)$, $|g_\nu(\lambda x) + k_\nu(x)|> \tau k_\nu(x)$, then  $x\in A_{n}(\tau)$. 
When $x \in (0,\nu)$, $g_\nu(x)>0$, $k_\nu(x)>0$, and $k_\nu(x) > \frac{2}{5}g_\nu(x)$ therefore the inclusion holds.
Since $g_{\frac{1}{2}}(x)>0$, $k_{\frac{1}{2}}(x)>0$, and $k_{\frac{1}{2}}(x) > g_{\frac{1}{2}}(x)$ on $(0,\frac{1}{2})$, the same is true when $n=0$.

The proof of Proposition~\ref{pro:Ito} will be complete once estimate \eqref{eq:bqme} is established, for $\lambda\geq7$. Since it is proved 
in \cite{CAPDEBOSCQ-11} Proposition 8.2 for $n\geq1$, we  only need to consider the case $n=0$, and
$\oeps\in(\alpha_{\frac{1}{2},1}^{(1)}/\lambda,\frac{1}{2})$. 
We have 
$$
g_{\frac{1}{2}}(x) = 1 + \frac{2x}{\tan x}\mbox{ and }k_{\frac{1}{2}}(x) = 1 + 2x\tan x>1 + 2x^2,
$$
and $\alpha_{\frac{1}{2},k} = k\pi$.
Introducing
\begin{equation}\label{eq:def-phin}
\phi_0:= \begin{array}[t]{rcl}
   (0,\frac{\pi}{2})\setminus \cup_k \{k\pi/\lambda\} &\to& \mathbb{R} \\
   x     &\to& \displaystyle \frac{g_{\frac{1}{2}} \left(\lambda x\right)}{k_{\frac{1}{2}} (x)}, 
\end{array}
\end{equation}
we have
$$
\phi_0(I_{\frac{1}{2},k}(\tau)) = [-1-\tau,-1+\tau].
$$
We first verify that $\phi_{\frac{1}{2}}$ is one-to-one on $I_{\frac{1}{2},k}(\tau)$, for $\tau$ small enough and $\lambda$ large enough.
Differentiating $\phi$ we find
$$
\phi^\prime(x) = \frac{2 x}{ k_{\frac{1}{2}}(x) } (1 -\lambda^2) 
+ \frac{g_{\frac{1}{2}}(\lambda x) + k_{\frac{1}{2}}(x)}{k^2_{\frac{1}{2}}(x)}
\left( \frac{1-g_{\frac{1}{2}}(\lambda x)  k_{\frac{1}{2}}(x)}{2x} -2x \right).
$$
When $-g_{\frac{1}{2}}(\lambda x)>k_{\frac{1}{2}}(x)$, we have 
$$
\frac{1-g_{\frac{1}{2}}(\lambda x)  k_{\frac{1}{2}}(x)}{2x}-2x > \frac{2+4x^4+4x^2}{2x} - 2x = \frac{1+2x^4}{2x}>0,
$$
therefore 
$$
\phi^\prime(x) < \frac{2 x}{ k_{\frac{1}{2}}(x) } (1 -\lambda^2).
$$ 
When $-g_{\frac{1}{2}}(\lambda x)<k_{\frac{1}{2}}(x)$ and $x\in I_{\frac{1}{2},k}(\tau)$, we have $0< g_{\frac{1}{2}}(\lambda x)+k_{\frac{1}{2}}(x)<\tau k_{\frac{1}{2}}(x)$,
and 
$$
\frac{g_{\frac{1}{2}}(\lambda x) + k_{\frac{1}{2}}(x)}{k^2_{\frac{1}{2}}(x)}
\left( \frac{1-g_{\frac{1}{2}}(\lambda x)  k_{\frac{1}{2}}(x)}{2x} -2x \right) 
< \frac{\tau }{x k_{\frac{1}{2}}(x)}\frac{ 1-4x^2 +k^2_{\frac{1}{2}}(x)}{2} 
< \frac{6\tau}{5} \frac{ x \lambda^2}{k_{\frac{1}{2}}(x)},
$$
since $1 < \alpha_{\frac{1}{2},1}^{(1)} < \lambda x$. Finally, $\lambda^2 < 1.05 (\lambda^2-1)$ when $\lambda\geq7$, thus for any $\tau\leq3/4$, we have obtained that
$$
\phi^\prime(x) < \frac{x}{ k_{\frac{1}{2}}(x) } (1 -\lambda^2)<0,
$$ 
for all $x\in I_{\frac{1}{2},k}(\tau)$.
In particular, if $I_{\frac{1}{2},k}(\tau)=[\zeta_k,\eta_k]$, we have 
$$
2\tau =\int_{\eta_k}^{\zeta_k} - \phi^\prime(t) dt \geq (\lambda^2-1)   \int_{\eta_k}^{\zeta_k} \frac{t}{ k_{\frac{1}{2}}(t) }dt = (\lambda^2-1)|I_{\frac{1}{2},k}(\tau)|
\frac{1}{\zeta_k - \eta_k} \int_{\eta_k}^{\zeta_k} \frac{t}{ k_{\frac{1}{2}}(t) }dt.
$$ 
Therefore 
$$
\sum_{k\in K(\lambda,0)} |I_{\frac{1}{2},k}(\tau)| \leq \frac{2\tau}{\lambda^2-1} \sum_{k\in K(\lambda,0)} (\zeta_k - \eta_k) \left(\int_{\eta_k}^{\zeta_k} \frac{t}{ k_{\frac{1}{2}}(t) }dt\right)^{-1}.
$$
since $x/k(x)$ is increasing on $(0,1/2)$, and $\eta_k > \alpha_{\frac{1}{2},k}^{(1)}/\lambda$,
\begin{eqnarray*}
\sum_{k\in K(\lambda,0)} (\zeta_k - \eta_k) \left(\int_{\eta_k}^{\zeta_k} \frac{t}{ k_{\frac{1}{2}}(t) }dt\right)^{-1} 
&\leq& \sum_{k\in K(\lambda,0)}  k_{\frac{1}{2}}\left(\frac{\alpha_{\frac{1}{2},k}^{(1)}}{\lambda}\right) \frac{\lambda}{\alpha_{\frac{1}{2},k}^{(1)}} \\
&\leq& \max_{k\in K}\frac{\lambda}{\alpha_{\frac{1}{2},k}^{(1)} - \alpha_{\frac{1}{2},{k-1}}^{(1)}}\int_{\alpha_{\frac{1}{2},0}^{(1)}/\lambda}^{\frac{1}{2}} \frac{1}{t}k_{\frac{1}{2}}(t) dt,
\end{eqnarray*}
where $\alpha_{\frac{1}{2},0}^{(1)}=\frac{1}{2\cos^2(\frac{1}{2})}$ 
 is a convenient choice (but any number smaller than  $\alpha_{\frac{1}{2},1}^{(1)}$ and greater than zero would do to write the Riemann sum)
.
The distance between two distinct positive solutions of $\tan(x)=2x$ is at least $\pi$, therefore
$$
\max_{k\in K}\frac{\lambda}{\alpha_{\frac{1}{2},k}^{(1)} - \alpha_{\frac{1}{2},{k-1}}^{(1)}}
 = \frac{\lambda}{\alpha_{\frac{1}{2},1}^{(1)} - \alpha_{\frac{1}{2},0}^{(1)}} < 2 \frac{\lambda^2-1}{\lambda},
$$
since $\lambda\geq7$, and
$$
\int_{\alpha_{\frac{1}{2},0}^{(1)}/\lambda}^{\frac{1}{2}} \frac{1}{t}k_{\frac{1}{2}}(t) dt = \ln(\lambda) +2 \ln(\cos(\frac{\alpha_{\frac{1}{2},0}^{(1)}}{\lambda})) < \ln(\lambda).
$$
We have obtained that
$$
\sum_{k\in K(\lambda,0)} |I_{\frac{1}{2},k}(\tau)| \leq \frac{4 \tau \ln \lambda}{\lambda},
$$
and this concludes our proof.
\end{proof}

The second step is to use Proposition~\ref{pro:Ito} to derive an estimate for the scattered field, for large enough contrast.
\begin{lem}\label{lem:highcontrast-Lemma}
Suppose $\lambda>7$. Let $\eta_{\max}$ be the following decreasing function of the contrast
\begin{equation}\label{eq:etamax}
\eta_{\max} =  \frac{5}{4}  \frac{\ln\lambda}{\lambda}.
\end{equation}
Given $\alpha>0$, for any $\eta>0$ such that
$$
\eta \leq \frac{1}{\alpha} \eta_{\max} 
$$ 
there exists a set $I$ depending on $\eta,\alpha,\eps$ and $\lambda$ such that  
$$
|I|<\frac{\eta}{\eps}
$$ 
and, for any $R\geq \eps$
$$
\sup\limits_{\sqrt{q_0}\omega \in(0,\infty)\,\setminus \, I} \left\Vert  u_{\eps}^{s}\left(|x|=R\right) \right\Vert _{H^{\sigma}} 
\leq 
18 \,\frac{\eps}{R} \,\frac{\eta_{\max}}{\eta \alpha} \,\mathcal{N}^{\sigma + 2 + \alpha}\left(u^i\right).
$$
\end{lem}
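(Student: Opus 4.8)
The plan is to remove, for each angular index $n$, a neighbourhood of the quasi-resonant frequencies furnished by Proposition~\ref{pro:Ito}, choosing the tolerance $\tau_n$ so that it decays in $n$ fast enough to reconstruct the extra weight $(2n+1)^{2+\alpha}$ carried by $\mathcal{N}^{\sigma+2+\alpha}$, while keeping the total excluded measure below $\eta/\eps$. Concretely I would set $\tau_n = c\,(2n+1)^{-(2+\alpha)}$ with a constant $c$ fixed only at the end, and define the excluded frequency set by $I = \{\sqrt{q_0}\,\omega : \oeps \in \bigcup_{n\geq 0} B_n(\tau_n)\}$. Since $\oeps = \sqrt{q_0}\,\omega\,\eps$, passing to the physical frequency variable rescales lengths by $1/\eps$, so $|I| \leq \tfrac{1}{\eps}\sum_{n\geq 0}|B_n(\tau_n)|$. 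The hypothesis $\lambda > 7$ is exactly what is needed to invoke Proposition~\ref{pro:Ito}.

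For the field estimate I would start from the series identity~\eqref{eq:norm-us-hs}. Off $I$, every index satisfies $\oeps \in A_n(\tau_n)$, that is $|\Rn\,\BH{n}{\oeps}| \leq \tfrac{9}{2\tau_n}\sup_{x>0}|\BJ{n}{x}|$. To carry this from radius $\eps$ to the observation radius $R$, I would use the monotonicity of $x\mapsto x|\BH{n}{x}|$ exploited already in the proof of Theorem~\ref{thm:estim-all} (see~\eqref{eq:pf21-1}), which gives $|\BH{n}{\oeps R/\eps}| \leq \tfrac{\eps}{R}|\BH{n}{\oeps}|$ for $R\geq\eps$. With $\tau_n = c(2n+1)^{-(2+\alpha)}$ each factor $\tfrac{9}{2\tau_n}$ becomes $\tfrac{9}{2c}(2n+1)^{2+\alpha}$, and by~\eqref{eq:norm-ui-nc} the resulting sum is exactly $\big(\tfrac{9}{2c}\big)^2 \tfrac{\eps^2}{R^2}\,\mathcal{N}^{\sigma+2+\alpha}(u^i)^2$. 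Hence, off $I$, $\|u_{\eps}^{s}(|x|=R)\|_{H^{\sigma}} \leq \tfrac{9}{2c}\,\tfrac{\eps}{R}\,\mathcal{N}^{\sigma+2+\alpha}(u^i)$.

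It then remains to fix $c$ by balancing the two estimates. Summing the measure bound~\eqref{eq:bqme} with $2\nu = 2n+1$ and using $\tau_n(2n+1) = c(2n+1)^{-(1+\alpha)}$ yields $\sum_{n\geq 0}|B_n(\tau_n)| \leq 4c\,\tfrac{\ln\lambda}{\lambda}\,S$, where $S := \sum_{n\geq 0}(2n+1)^{-(1+\alpha)}$. Choosing $c = \tfrac{5\eta}{16\,S\,\eta_{\max}}$ and using $\tfrac{\ln\lambda}{\lambda} = \tfrac{4}{5}\eta_{\max}$ from~\eqref{eq:etamax} makes $|I| \leq \eta/\eps$ (strict after an arbitrarily small reduction of $c$, permitted by the strict slack in $S < \tfrac{5}{4\alpha}$ below), while the field constant becomes $\tfrac{9}{2c} = \tfrac{72\,S\,\eta_{\max}}{5\eta}$. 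The bound $S\leq \tfrac{5}{4\alpha}$ then gives precisely $\tfrac{9}{2c} \leq \tfrac{18\,\eta_{\max}}{\alpha\eta}$, which is the claimed constant. Finally one must check the $\tau_n$ are admissible for Proposition~\ref{pro:Ito}: from $\eta\leq\eta_{\max}/\alpha$ one gets $c\leq\tfrac{5}{16 S\alpha}$, and since $S\geq\int_0^\infty(2t+1)^{-(1+\alpha)}\,dt = \tfrac{1}{2\alpha}$ one has $S\alpha\geq\tfrac12$, whence $\tau_0 = c\leq\tfrac58<\tfrac34$ and $\tau_n\leq c\cdot 3^{-(2+\alpha)}\leq c/9\leq\tfrac{5}{72}<\tfrac14$ for $n\geq 1$.

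The main obstacle is the sharp summation inequality $S=\sum_{n\geq 0}(2n+1)^{-(1+\alpha)}\leq\tfrac{5}{4\alpha}$ for $\alpha\in(0,1]$, which is what produces the exact constant $18$. The crude comparison $S\leq 1+\tfrac{1}{2\alpha}$ is too lossy near $\alpha=1$; instead I would use the midpoint (convexity) comparison $\sum_{n\geq 1}(2n+1)^{-(1+\alpha)}\leq\int_{1/2}^{\infty}(2t+1)^{-(1+\alpha)}\,dt=\tfrac{2^{-\alpha}}{2\alpha}$, giving $S\alpha\leq\alpha+\tfrac{2^{-\alpha}}{2}$, an increasing function of $\alpha$ equal to $\tfrac54$ at $\alpha=1$. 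This simultaneously pins down the admissible range $\alpha\in(0,1]$ and the precise constant; the remainder is the bookkeeping of Proposition~\ref{pro:Ito} together with the monotonicity of $x|\BH{n}{x}|$.
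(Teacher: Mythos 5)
Your proposal is correct and follows essentially the same route as the paper: both choose $n$-dependent tolerances $\tau_n\propto(2n+1)^{-(2+\alpha)}$ in Proposition~\ref{pro:Ito}, use the radial decay from \eqref{eq:pf21-1} and the defining bound of $A_n(\tau_n)$ to produce $\mathcal{N}^{\sigma+2+\alpha}(u^i)$, and sum the measure bound \eqref{eq:bqme} against $\eta_{\max}=\frac{5}{4}\frac{\ln\lambda}{\lambda}$. The only differences are cosmetic (you keep the normalization $c$ free and fix it at the end, and you bound $\alpha\sum(2n+1)^{-(1+\alpha)}\leq\alpha+2^{-\alpha}/2\leq\frac{5}{4}$ by a midpoint comparison where the paper quotes $\frac{\pi^2}{10}$); your explicit admissibility check $\tau_0<\frac34$, $\tau_n\leq\frac14$ is a welcome detail the paper leaves implicit.
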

\begin{proof}
Recall that we established in \eqref{eq:pf21-1}, that for all $R\geq\eps$ we have
$$
\left\Vert  u_{\eps}^{s}\left(|x|=R\right) \right\Vert^2_{H^{\sigma}} \leq   \frac{\eps^2}{R^2}
\sum_{n=p_0}^{\infty} \sum_{m=-n}^{n} \left| a_{n,m} \,(2\nu)^{\sigma} \Rn \BH{n}{\oeps}\right|^2  
$$
For $0<\alpha\leq1$, and $\eta< \frac{1}{\alpha} \eta_{\max}$, let $\tau_n$ be given by
$$
\tau_n = \frac{1}{4(2\nu)^{2+\alpha}} \frac{\alpha \eta}{\eta_{\max}}.
$$
If $\oeps \not\in \bigcup_{n=1}^\infty B_{n,\tau_n}$, we have thanks to Proposition~~\ref{pro:Ito}
\begin{eqnarray*}
&&\sum_{n=p_0}^{\infty} \sum_{m=-n}^{n} \left|a_{n,m}\,(2\nu)^{\sigma} \Rn \BH{n}{\oeps}\right|^2  \\
&\leq& \sum_{n=0}^{\infty} \sum_{m=-n}^{n} \left|a_{n,m} 
(2\nu)^{\sigma} \frac{9}{2\,\tau_{n}} \sup_{x>0} |\BJ{n}{x}|\right|^2 \\
&=& \left(\frac{18\eta_{\max}}{\eta\alpha}\right)^2 \mathcal{N}^{\sigma+2+\alpha}(u^i)^2.
\end{eqnarray*}
From Proposition~\ref{pro:Ito}, we also know that
$$
|\bigcup_{n=1}^\infty B_{n,\tau_n}|\leq \frac{4 \ln \lambda}{\lambda} \sum_{n=0}^\infty (2\nu) \tau_n 
= \frac{\ln \lambda}{\lambda} \frac{\eta}{\eta_{\max}} \sum_{n=0}^\infty \frac{\alpha}{(2n+1)^{1+\alpha}}
\leq \eta \frac{\pi^2}{10}\leq \eta.
$$
To conclude, note that the set of excluded frequencies for $\sqrt{q_0}\omega$ is $\frac{1}{\eps} \bigcup_{n=1}^\infty B_{n,\tau_n}$.
\end{proof}
We can now conclude the proof of Theorem~\ref{thm:broadband}. 
\begin{proof}[Proof of Theorem~\ref{thm:broadband}]
When $0<\lambda \leq 1$, Theorem~\ref{thm:estim-all} implies Theorem~\ref{thm:broadband}, with $I=\emptyset$. 
When $1<\lambda\leq\eps^{-2/3}$. Theorem~\ref{thm:estim-all} shows that for all $R\geq \eps^{1/3}$, we have
$$
\sup\limits_{\omega>0} \left\Vert u_{\eps}^{s} \left(|x|=R\right) \right\Vert _{H^{\sigma}} 
\leq 
2^{4/3} \frac{\eps^{1/3}}{R} \,\mathcal{N}^{\sigma} \left(u ^{i} \right),
$$
so we can again select $I=\emptyset$. Suppose now $\lambda=\eps^{s}$, with $s>2/3$. Then $\lambda>7$,  and we can apply Lemma~\ref{lem:highcontrast-Lemma}. 
Choosing 
$$
\eta=\frac{3}{2}\,s \,\eps^{s+\frac{2}{3}}\,\lneps,
$$
we have for all $0<\alpha\leq1$,
$$
\eta < \frac{1}{\alpha} \eta_{\max},
$$
and there exists a set $I$ depending on $\lambda, \alpha$ and $\eps$ such that
for any $R\geq \eps$,
$$
\sup\limits_{\sqrt{q_0}\omega \in(0,\infty)\,\setminus \, I} \left\Vert  u_{\eps}^{s}\left(|x|=R\right) \right\Vert _{H^{\sigma}} 
\leq 
\frac{15}{\alpha} \,\frac{\eps^{1/3}}{R} \,\mathcal{N}^{\sigma + 2 + \alpha}\left(u^i\right).
$$
The size of the set $I$ is bounded by
$$
|I|\leq \frac{3}{2} s \eps^{s-1/3} \lneps \leq \eps^{1/3} \lneps,
$$
since $s\to s\eps^{s-1/3}$ is decreasing when $s\geq2/3>\lneps^{-1}$.
\end{proof}

\section*{Acknowledgements }
This work was completed in part while George Leadbetter and Andrew Parker  were visiting OxPDE during a summer undergraduate research internship
awarded by OxPDE in 2011, and they would like to thank the Centre for the wonderful time they had there.


\providecommand{\bysame}{\leavevmode\hbox to3em{\hrulefill}\thinspace}
\providecommand{\MR}{\relax\ifhmode\unskip\space\fi MR }
\providecommand{\MRhref}[2]{%
  \href{http://www.ams.org/mathscinet-getitem?mr=#1}{#2}
}
\providecommand{\href}[2]{#2}
\begin{thebibliography}{KOVW10}

\bibitem[Cap12]{CAPDEBOSCQ-11}
Y.~Capdeboscq, \emph{On the scattered field generated by a ball inhomogeneity
  of constant index in three dimension}, Asymptot. Anal. (2012), in press.

\bibitem[KOVW10]{KOHN-ONOFREI-VOGELIUS-WEINSTEIN-10}
R.~V. Kohn, D.~Onofrei, M.~S. Vogelius, and M.~I. Weinstein, \emph{Cloaking via
  change of variables for the {H}elmholtz equation}, Comm. Pure Appl. Math.
  \textbf{63} (2010), no.~8, 973--1016. \MR{2642383}

\bibitem[Lan00]{LANDAU-00}
L.~J. Landau, \emph{Bessel functions: Monotonicity and bounds}, J. London Math.
  Soc., 61 (2000), 197--215.

\bibitem[LZ11]{LIU-ZHOU-11}
H.~Liu and T.~Zhou, \emph{On approximate electromagnetic cloaking by
  transformation media}, SIAM J. Appl. Math. \textbf{71} (2011), no.~1,
  218--241. \MR{2776835}

\bibitem[M\"ul69]{MULLER-69}
C.~M\"uller, \emph{Foundations of the mathematical theory of electromagnetic
  waves}, Revised and enlarged translation from the German. Die Grundlehren der
  mathematischen Wissenschaften, Band 155, Springer-Verlag, New York, 1969.
  \MR{0253638 (40 \#6852)}

\bibitem[Ngu12]{HMNGUYEN-11}
H.-M. Nguyen, \emph{Approximate cloaking for the {H}elmholtz equation via
  transformation optics and consequences for perfect cloaking}, Comm. Pure Appl. Math. \textbf{65} (2012), no.~2, 155--186.

\bibitem[NV09]{NGUYEN-VOGELIUS-09}
H.-M. Nguyen and M.~S. Vogelius, \emph{A representation formula for the voltage
  perturbations caused by diametrically small conductivity inhomogeneities.
  {P}roof of uniform validity}, Ann. Inst. H. Poincar\'e Anal. Non Lin\'eaire
  \textbf{26} (2009), no.~6, 2283--2315. \MR{2569895 (2011f:78003)}

\bibitem[OLBC10]{NIST-10}
F.~W.~J. Olver, D.~W. Lozier, R.~F. Boisvert, and C.~W. Clark (eds.),
  \emph{N{IST} handbook of mathematical functions}, U.S. Department of Commerce
  National Institute of Standards and Technology, Washington, DC, 2010, With 1
  CD-ROM (Windows, Macintosh and UNIX). \MR{2723248}

\bibitem[Par84]{PARIS-84}
R.~B. Paris, \emph{An inequality for the {B}essel function {$J_\nu(\nu x)$}},
  SIAM J. Math. Anal. \textbf{15} (1984), no.~1, 203--205. \MR{728695
  (85d:33020)}

\bibitem[QW99]{QU-WONG-99}
C.~K. Qu and R.~Wong, \emph{``{B}est possible'' upper and lower bounds for the
  zeros of the {B}essel function {$J_\nu(x)$}}, Trans. Amer. Math. Soc.
  \textbf{351} (1999), no.~7, 2833--2859. \MR{1466955 (99j:33006)}

\bibitem[Wat95]{WATSON-25}
G.~N. Watson, \emph{A treatise on the theory of {B}essel functions}, Cambridge
  Mathematical Library, Cambridge University Press, Cambridge, 1995, Reprint of
  the second (1944) edition.

\end{thebibliography}

\providecommand{\bysame}{\leavevmode\hbox to3em{\hrulefill}\thinspace}
\providecommand{\MR}{\relax\ifhmode\unskip\space\fi MR }
\providecommand{\MRhref}[2]{%
  \href{http://www.ams.org/mathscinet-getitem?mr=#1}{#2}
}
\providecommand{\href}[2]{#2}

\end{document}